\documentclass[reqno,10pt]{amsart}


\usepackage{amsmath}
\usepackage{amsfonts}
\usepackage{amssymb}
\usepackage{amsxtra}
\usepackage{amstext}
\usepackage{latexsym}
\usepackage{amsthm}
\usepackage{graphicx}
\usepackage[all,2cell]{xy}
\usepackage{dsfont} 
\usepackage{mathrsfs}
\usepackage{stmaryrd} 


\newtheorem{theorem}{Theorem}[section]
\newtheorem{lemma}[theorem]{Lemma}

\theoremstyle{definition}
\newtheorem{definition}[theorem]{Definition}

\newtheorem{def-thm}[theorem]{Definition-Theorem}
\newtheorem{def-lemma}[theorem]{Definition-Lemma}
\newtheorem{def-prop}[theorem]{Definition-Proposition}

\newtheorem*{xnotations}{Notation}

\theoremstyle{remark}
\newtheorem{remark}[theorem]{Remark}


\numberwithin{theorem}{section}
\numberwithin{equation}{section}
\numberwithin{figure}{section}
\numberwithin{table}{section}


\DeclareMathOperator{\ev}{ev}

\DeclareMathOperator{\id}{id}

\DeclareMathOperator{\op}{op}

\DeclareMathOperator{\pr}{pr}

\DeclareMathOperator{\Set}{\mathbf{Set}}

\DeclareMathOperator{\sSet}{\mathbf{sSet}}

\DeclareMathOperator{\ub}{\underline{\bullet}}
\DeclareMathOperator{\usSet}{\underline{\mathbf{sSet}}}




\newdir{ >}{{}*!/-10pt/@{>}}

\usepackage{eucal}

\renewcommand{\S}{\mathcal{S}}
\newcommand{\uS}{\underline{\mathcal{S}}}

\begin{document}

\title{Cylinders and paths in simplicial categories}

\author{Seunghun Lee}

\address{Department of Mathematics, Konkuk University,
Kwangjin-Gu Hwayang-dong 1, Seoul 143-701, Korea}

\email{mbrs@konkuk.ac.kr}

\thanks{This research was supported by Basic Science Research
Program through the National Research Foundation of Korea(NRF)
funded by the Ministry of Education, Science and
Technology(2009-0076403)}

\subjclass[2010]{Primary 18G55; Secondary 55U35}

\date{}


\begin{abstract}
We prove the uniqueness, the functoriality and the naturality of
cylinder objects and path objects in closed simplicial model categories.
\end{abstract}

\maketitle



\section{Introduction}

In \cite{quillen-67}, Quillen introduces the closed model category.
It is a category where one can do a homotopy theory.
In the first chapter, he sets up a general theory 
of closed model categories.
In the second chapter, he studies the closed simplicial model category.
They are closed model categories enriched in
the category $\sSet$ of simplicial sets with two additional properties.
Two basic notions in the closed model category are
the cylinder object and the path object.
They have no functorial properties in general.

Quillen also defines the cylinder object and the path object
in the closed simplicial model category.\footnote{Quillen did not name them.
But they play the same role as cylinder objects and path objects
in closed model categories do.
So we will call them the cylinder object and the path object.}
In fact, he defines them in the simplicial category.
We refer to Definition~\ref{def:esht.1.14}
for the definition of the map $\ev_{K,X}$.

\begin{definition}
\label{def:esht.1.2}
  Let $\S$ be a simplicial category.
  Let $X\in\S$ and $K\in\sSet$.
  \begin{enumerate}
    \item
        We call the following triple a cylinder object of $X$ over $K$,
        or simply a $K$-cylinder of $X$.
        \begin{enumerate}
          \item An object $X\otimes K$ of $\S$.
          \item
              A morphism of $\sSet$
              \begin{equation*}
                \alpha_{K,X}:K\rightarrow \uS(X,X\otimes K).
              \end{equation*}
          \item For every $Y\in \S$, an isomorphism of $\sSet$
              \begin{equation*}
                \phi_{K,X,Y}:\uS(X\otimes K,Y)
                  \rightarrow \usSet(K,\uS(X,Y))
              \end{equation*}
              making the following diagram commutes:
              \begin{equation*}
                \xymatrix{
                  K\times \uS(X\otimes K,Y)
                    \ar[d]_{\id_K\times \phi_{K,X,Y}}
                    \ar[rrr]^(0.44)
                      {\alpha_{K,X}\times_{\id_{\uS(X\otimes K,Y)}}}
                    &&& \uS(X,X\otimes K)\times \uS(X\otimes K , Y)
                    \ar[d]^{\ub_{X,X\otimes K ,Y}}\\
                  K\times\usSet(K, \uS(X,Y))
                    \ar[rrr]_(0.58){\ev_{K,\usSet(K, \uS(X,Y))}}
                      &&& \uS(X,Y)}
              \end{equation*}
        \end{enumerate}
    \item
        We call the following triple a path object of $X$ over $K$,
        or simply a $K$-path of $X$.
        \begin{enumerate}
          \item An object $X^K$ of $\S$.
          \item
              A morphism of $\sSet$
              \begin{equation*}
                \beta_{K,X}:K\rightarrow \uS(X^K,X).
              \end{equation*}
          \item For every $Y\in \S$, an isomorphism of $\sSet$
              \begin{equation*}
                \psi_{K,Y,X}:\uS(Y,X^K)
                  \rightarrow \usSet(K,\uS(Y,X))
              \end{equation*}
              making the following diagram commutes:
              \begin{equation*}
                \xymatrix{
                  K\times \uS(Y,X^K)
                    \ar[d]_{\id_K\times \psi_{K,Y,X}}
                    \ar[rrr]^(0.44){(\pr_2,\beta_{K,X}\cdot\pr_1)}
                    &&& \uS(Y,X^K)\times \uS(X^K,X)
                    \ar[d]^{\ub_{Y,X^K,X}}\\
                  K\times\usSet(K, \uS(Y,X))
                    \ar[rrr]_(0.58){\ev_{K,\usSet(K, \uS(Y,X))}}
                      &&& \uS(Y,X)}
              \end{equation*}
        \end{enumerate}
  \end{enumerate}
\end{definition}

Compared to the definitions of the cylinder object and the path object
in the closed model category,
the above definitions are more complicated.
So one may expect that they have additional properties
which are not apparent from the definitions.
In fact, there are functorial properties hidden.
Quillen use them in \cite{quillen-67}.
But he does not prove nor state it explicitly.
In \cite{hirschhorn-03},
the functorial properties are a part of the definition.

The aim of this note is to provide the proof.
To be more precise, we will prove that cylinder objects and path objects
in Definition \ref{def:esht.1.2} are unique up to
unique isomorphisms subject to the compatibility with morphisms in (b),
that objects in (a) are parts of functors and that
maps in (c) constitute natural isomorphisms between appropriate functors.
In particular, if they exist, they satisfy the axiom~\textbf{M6}
in \cite{hirschhorn-03}

Our first result is the uniqueness.

\begin{theorem}
\label{thm:esht.1.1}
  Let $\S$ be a simplicial category. Let $X\in\S$ and $K\in\sSet$.
  \begin{enumerate}
    \item
        Let
        $(X\otimes K,\alpha_{K,X},\{\phi_{K,X,Y}\}_{Y\in\S})$
        and $(X\otimes' K,\alpha_{K,X}',\{\phi_{K,X,Y}'\}_{Y\in\S})$
        be two $K$-cylinders of $X$.
        Then
        there exists a unique isomorphism
        \[f:X\otimes K\rightarrow X\otimes' K\] of $\S$
        such that
        \[
          \alpha_{K,X}'=\uS(X,f)\bullet\alpha_{K,X}.
        \]
        In particular, $X\otimes K$ and $X\otimes' K$ are isomorphic.
    \item
        Let
        $(X^K,\beta_{K,X},\{\psi_{K,X,Y}\}_{Y\in\S})$
        and
        $({X^K}',\beta_{K,X}',\{\psi_{K,X,Y}'\}_{Y\in\S})$
        be two $K$-paths of $X$.
        Then, there exists a unique isomorphism
        \[f:X^K\rightarrow {X^K}'\]
        such that
        \[
          \beta_{K,X}=\uS(f,X)\bullet\beta_{K,X}'.
        \]
        In particular, $X^K$ and $X^{K'}$ are isomorphic.
  \end{enumerate}
\end{theorem}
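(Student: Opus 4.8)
The plan is to observe that, by the defining property of the evaluation map $\ev_{K,-}$, the commuting square in Definition~\ref{def:esht.1.2}(1)(c) is equivalent to the statement that $\phi_{K,X,Y}$ is the transpose of $\ub_{X,X\otimes K,Y}\circ(\alpha_{K,X}\times\id_{\uS(X\otimes K,Y)})$ under the adjunction $K\times(-)\dashv\usSet(K,-)$. In particular $\phi_{K,X,Y}$ is determined by $\alpha_{K,X}$, and on $0$-simplices it is given by
\[
  \phi_{K,X,Y}(g)=\uS(X,g)\bullet\alpha_{K,X}\qquad(g\colon X\otimes K\rightarrow Y),
\]
as one checks by precomposing the square with $\id_K\times g\colon K\times\Delta^0\rightarrow K\times\uS(X\otimes K,Y)$ and using that the degeneracy of the $0$-simplex $g$ acts on hom spaces by post-composition. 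Thus a $K$-cylinder of $X$ amounts to a representation of the functor $Y\mapsto\usSet(K,\uS(X,Y))$, with the primed analogue $\phi'_{K,X,Y}(g)=\uS(X,g)\bullet\alpha'_{K,X}$ for $(X\otimes' K,\alpha'_{K,X})$; note also $\phi_{K,X,X\otimes K}(\id_{X\otimes K})=\alpha_{K,X}$. What remains for part (1) is the standard uniqueness-of-representations argument made explicit.

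Carrying this out: apply the displayed formula with $Y=X\otimes' K$. As $\phi_{K,X,X\otimes' K}$ is an isomorphism of simplicial sets it is a bijection on $0$-simplices, so there is a unique morphism $f\colon X\otimes K\rightarrow X\otimes' K$ with $\phi_{K,X,X\otimes' K}(f)=\alpha'_{K,X}$; by the formula this reads exactly $\alpha'_{K,X}=\uS(X,f)\bullet\alpha_{K,X}$, and conversely any $\widetilde{f}$ with that property has $\phi_{K,X,X\otimes' K}(\widetilde{f})=\alpha'_{K,X}=\phi_{K,X,X\otimes' K}(f)$, so $\widetilde{f}=f$ by injectivity. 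For invertibility, the symmetric construction gives $f'\colon X\otimes' K\rightarrow X\otimes K$ with $\alpha_{K,X}=\uS(X,f')\bullet\alpha'_{K,X}$; then, using the formula and the defining properties of $f$ and $f'$, $\phi_{K,X,X\otimes K}(f'\bullet f)=\alpha_{K,X}=\phi_{K,X,X\otimes K}(\id_{X\otimes K})$, so $f'\bullet f=\id_{X\otimes K}$ by injectivity, and symmetrically $f\bullet f'=\id_{X\otimes' K}$. Hence $f$ is an isomorphism, and the final sentence of (1) follows.

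Part (2) is the formal dual. Precomposing the square in Definition~\ref{def:esht.1.2}(2)(c) with $\id_K\times g$ for a morphism $g\colon Y\rightarrow X^K$ yields, by the same computation, $\psi_{K,Y,X}(g)=\uS(g,X)\bullet\beta_{K,X}$ and its primed analogue --- the contravariant functor $\uS(-,X)$ now appearing because the right-hand map of the square pairs $\pr_2$ with $\beta_{K,X}\cdot\pr_1$. Taking $Y=X^K$ and using that $\psi'_{K,X^K,X}$ is a bijection on $0$-simplices produces the unique $f\colon X^K\rightarrow{X^K}'$ with $\psi'_{K,X^K,X}(f)=\beta_{K,X}$, that is, $\beta_{K,X}=\uS(f,X)\bullet\beta'_{K,X}$; uniqueness and invertibility then go through exactly as in part (1), now using that $\uS(-,X)$ is contravariant, so $\uS(h_1\bullet h_2,X)=\uS(h_2,X)\bullet\uS(h_1,X)$.

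The one step that genuinely requires care is the displayed identity of the first paragraph: checking that, after transposition, the compatibility square really pins $\phi_{K,X,Y}$ (respectively $\psi_{K,Y,X}$) down to the stated formula is a levelwise bookkeeping with the cartesian product, the counit $\ev_{K,-}$, and the composition map $\ub$. Everything after that is formal manipulation of the isomorphisms $\phi$ and $\psi$ --- the Yoneda-type rigidity of representing data --- and presents no further obstacle.
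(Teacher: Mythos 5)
Your proof is correct and follows essentially the same route as the paper: your displayed formula $\phi_{K,X,Y}(g)=\uS(X,g)\bullet\alpha_{K,X}$ on $0$-simplices is precisely the paper's Lemma~\ref{lem:esht.1.24} (and Lemma~\ref{lem:esht.1.31} for the path case), after which existence and uniqueness of $f$ follow from the bijectivity of $(\phi_{K,X,X\otimes'K})_0$. You in fact go slightly further than the paper's two-line proof by spelling out the Yoneda-style argument that $f$ is invertible, a step the paper leaves implicit.
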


The second result is the functoriality.

\begin{theorem}
\label{thm:esht.1.2}
  Let $\S$ be a simplicial category.
  We assume that, for every $X\in\S$ and $K\in\sSet$,
  a $K$-cylinder of $X$ and a $K$-path of $X$ exist.
  \begin{enumerate}
    \item
        For each $(X,K)\in\S\times\sSet$,
        we fix a $K$-cylinder
        $(X\otimes K,\alpha_{K,X},\{\phi_{K,X,Y}\}_{Y\in\S})$ of $X$.
        Then, there exists a unique functor
        \[(-)\otimes(-):\S\times\sSet\rightarrow \S\]
        satisfying the following two conditions.
        \begin{enumerate}
          \item On objects, it maps $(X,K)$ to $X\otimes K$.
          \item
              If $L\in\sSet$,
              $Y\in\S$, $u\in\sSet(K,L)$ and $f\in\S(X,Y)$, then
              \[
                \xymatrix{
                  K \ar[r]^(0.3){\alpha_{K,X}} \ar[d]_u
                    & \uS(X,X\otimes K) \ar[d]^{\uS(X,X\otimes u)}\\
                  L \ar[r]_(0.3){\alpha_{L,X}} & \uS(X,X\otimes L)}
              \]
              and
              \[
                \xymatrix{
                  K \ar[rr]^(0.4){\alpha_{K,X}}
                    \ar[d]_{\alpha_{K,Y}}
                    && \uS(X,X\otimes K)
                    \ar[d]^{\uS(X,f\otimes K)}\\
                  \uS(Y,Y\otimes K) \ar[rr]_{\uS(f,Y\otimes K)}
                    && \uS(X, Y\otimes K)}
              \]
              commute.
        \end{enumerate}
    \item
        For each $(X,K)\in\S\times\sSet$,
        we fix a $K$-path
        $(X^K,\beta_{K,X},\{\psi_{K,X,Y}\}_{Y\in\S})$ of $X$.
        Then, there is a  unique functor
        \[(-)^{(-)}:\S\times\sSet\rightarrow \S\]
        satisfying the following two conditins.
        \begin{enumerate}
          \item
               On objects, it maps $(X,K)$ to $X^K$.
          \item
              If $L\in\sSet$,
              $Y\in\S$, $u\in\sSet(K,L)$ and $f\in\S(Y,X)$, then
              \begin{equation*}
                \xymatrix{
                  K \ar[r]^(0.3){\beta_{K,X}} \ar[d]_u
                    & \uS(X^K,X) \ar[d]^{\uS(X^u,X)}\\
                  L \ar[r]_(0.3){\beta_{L,X}} & \uS(X^L,X)}
              \end{equation*}
              and
              \begin{equation*}
                \xymatrix{
                  K \ar[rr]^(0.4){\beta_{K,X}} \ar[d]_{\beta_{K,Y}}
                    && \uS(X^K,X) \ar[d]^{\uS(f^K,X)}\\
                  \uS(Y^K,Y) \ar[rr]_{\uS(Y^K,f)}
                    && \uS(Y^K,X)}
              \end{equation*}
              commute.
        \end{enumerate}
  \end{enumerate}
\end{theorem}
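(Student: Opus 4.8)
The plan is to build the functor on morphisms directly from the universal property in part (c) of Definition~\ref{def:esht.1.2}, using Theorem~\ref{thm:esht.1.1} to force uniqueness. I will concentrate on the cylinder case, as the path case is formally dual (reverse all arrows and replace $\uS(-,-)$ by $\uS(-,-)$ with arguments swapped). So fix, once and for all, a choice of $K$-cylinder $(X\otimes K,\alpha_{K,X},\{\phi_{K,X,Y}\})$ for every pair $(X,K)$.

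\textbf{Step 1: Define the functor on morphisms.} Given $u\in\sSet(K,L)$ and $f\in\S(X,Y)$, I want to produce $f\otimes u\in\S(X\otimes K,Y\otimes L)$. The idea is that a morphism out of a cylinder $X\otimes K$ is determined, via $\phi_{K,X,X\otimes K}$ or rather via adjunction, by a map $K\to\uS(X,-)$; concretely, $\S(X\otimes K,Y\otimes L)\cong\usSet(K,\uS(X,Y\otimes L))$ by applying $\phi_{K,X,Y\otimes L}$ on $0$-simplices. Under this bijection I define $f\otimes u$ to be the unique morphism corresponding to the composite
\[
  K \xrightarrow{\ u\ } L \xrightarrow{\ \alpha_{L,Y}\ } \uS(Y,Y\otimes L) \xrightarrow{\ \uS(f,Y\otimes L)\ } \uS(X,Y\otimes L).
\]
Equivalently, this is the element $\phi_{L,X,Y\otimes L}^{-1}\big(\uS(f,Y\otimes L)\circ\alpha_{L,Y}\circ u\big)$ interpreted through the canonical identification of $0$-simplices of $\uS(X\otimes L,Y\otimes L)$ — but since $u$ changes the source $K$, one must be slightly careful; the cleanest formulation is that $f\otimes u$ is the unique $g\in\S(X\otimes K,Y\otimes L)$ such that $\uS(X,g)\circ\alpha_{K,X}=\uS(f,Y\otimes L)\circ\alpha_{L,Y}\circ u$, and existence/uniqueness of such a $g$ is exactly the universal property (c) combined with the commuting square in Definition~\ref{def:esht.1.2}, applied to $Y\otimes L$ in the role of the target object. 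Then $\id_X\otimes\id_K=\id_{X\otimes K}$ is immediate from this characterization.

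\textbf{Step 2: Verify functoriality (composition).} Given $u\colon K\to L$, $v\colon L\to M$, $f\colon X\to Y$, $g\colon Y\to Z$, I must show $(g\otimes v)\bullet(f\otimes u)=(gf)\otimes(vu)$. Using the characterization from Step~1, it suffices to check that $\uS(X,(g\otimes v)\bullet(f\otimes u))\circ\alpha_{K,X}$ equals $\uS(gf,Z\otimes M)\circ\alpha_{M,Z}\circ vu$, and this follows by unwinding: $\uS(X,g\otimes v)\circ\uS(X,f\otimes u)\circ\alpha_{K,X}=\uS(X,g\otimes v)\circ\uS(f,Y\otimes L)\circ\alpha_{L,Y}\circ u$, then naturality of $\uS(-,-)$ in both variables together with the defining relation for $g\otimes v$ collapses this to the desired expression. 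Crucially $\uS$ being a two-variable $\sSet$-functor gives the interchange $\uS(X,h)\circ\uS(f,W)=\uS(f,W')\circ\uS(X',h)$ for composable data, which is what makes the bookkeeping go through.

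\textbf{Step 3: Verify the two coherence squares in (b) and uniqueness of the functor.} The first square (fix $X$, vary $u\colon K\to L$) is precisely the special case $f=\id_X$ of the defining relation, since $\alpha_{L,X}\circ u=\uS(X,\id_X\otimes u)\circ\alpha_{K,X}=\uS(X,X\otimes u)\circ\alpha_{K,X}$. The second square (fix $K$, vary $f\colon X\to Y$) is the special case $u=\id_K$, giving $\uS(X,f\otimes K)\circ\alpha_{K,X}=\uS(f,Y\otimes K)\circ\alpha_{K,Y}$, which is exactly that square. For uniqueness: if $F$ is any functor with $F(X,K)=X\otimes K$ making both squares commute, then $F$ applied to the morphism $(f,u)=(f,\id_L)\circ(\id_X,u)$ factors as $F(f,\id_L)\bullet F(\id_X,u)$, and the two squares pin down $\uS(X,F(\id_X,u))\circ\alpha_{K,X}$ and $\uS(?,F(f,\id_L))\circ\alpha$, forcing $F(f,u)$ to satisfy the same universal characterization as $f\otimes u$; then Theorem~\ref{thm:esht.1.1}(1), or rather just the uniqueness half of property (c), gives $F(f,u)=f\otimes u$.

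The main obstacle I anticipate is purely notational: property (c) is stated as an isomorphism $\phi_{K,X,Y}\colon\uS(X\otimes K,Y)\to\usSet(K,\uS(X,Y))$ together with a compatibility square involving $\ev$ and the composition $\ub$, and translating ``the $0$-simplex of $\uS(X\otimes K,Y)$ determined by a map $K\to\uS(X,Y)$'' into a statement one can manipulate requires carefully reading off what the square says on $0$-simplices — namely that for $g\in\S(X\otimes K,Y)$ one has $\uS(X,g)\circ\alpha_{K,X}=\phi_{K,X,Y}(g)$ as maps $K\to\uS(X,Y)$. Once this single identity is extracted (it is immediate from chasing a generic simplex of $K$ around the square, using that $\ev$ composed with the inclusion of a fixed $0$-simplex of the mapping space is evaluation at that simplex), everything else is a diagram chase with the bifunctoriality of $\uS$. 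I will therefore isolate that identity as a preliminary observation and then run Steps~1--3 against it; the path-object half is obtained by the evident dualization and needs no separate argument.
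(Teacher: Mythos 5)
Your proposal is correct, and it rests on exactly the same key identity as the paper: the observation you isolate at the end (that for $g\in\S(X\otimes K,Y)$ the square in Definition~\ref{def:esht.1.2}(c) forces $\phi_{K,X,Y}$ to send $\widetilde{g}$ to $\uS(X,g)\bullet\alpha_{K,X}$) is precisely the paper's Lemma~\ref{lem:esht.1.24}, and the existence/uniqueness of a morphism realizing a prescribed map $K\to\uS(X,-)$ is how both you and the paper define the action on morphisms. The organizational difference is in the decomposition of the bifunctor. The paper defines the two partial actions $X\otimes u$ and $f\otimes K$ separately (Lemmas~\ref{lem:esht.1.25} and~\ref{lem:esht.1.27}), checks each partial functor is a functor, and then must prove the interchange law $(Y\otimes u)\bullet(f\otimes K)=(f\otimes L)\bullet(X\otimes u)$ by a cube-shaped diagram chase before the bifunctor is assembled. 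You instead define the diagonal $f\otimes u$ in one stroke as the unique $g$ with $\uS(X,g)\bullet\alpha_{K,X}=\uS(f,Y\otimes L)\bullet\alpha_{L,Y}\bullet u$, recover the partial actions as the special cases $f=\id_X$ and $u=\id_K$, and get the interchange for free from functoriality of the diagonal; the price is that your Step~2 composition check needs the two-variable naturality $\uS(X,h)\bullet\uS(f,W)=\uS(f,W')\bullet\uS(X',h)$ in one combined computation rather than two separate ones. Both routes are sound; yours is marginally more economical, the paper's makes the two commuting squares in condition~(b) definitional rather than derived. Your uniqueness argument (factor $(f,u)=(f,\id_L)\circ(\id_X,u)$ and note the squares pin down each factor) matches what the paper's uniqueness clauses implicitly rely on, and deferring the path case to dualization is legitimate since the paper's own treatment of $(-)^{(-)}$ is a verbatim mirror of the cylinder case.
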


Our final result is the naturality.

\begin{theorem}
\label{thm:esht.1.3}
  Let $\S$ be a simplicial category.
  We assume that, for every $X\in\S$ and $K\in\sSet$,
  a $K$-cylinder of $X$ and a $K$-path of $X$ exist.
  \begin{enumerate}
    \item
        For each $(X,K)\in\S\times\sSet$,
        we fix a $K$-cylinder
        $(X\otimes K,\alpha_{K,X},\{\phi_{K,X,Y}\}_{Y\in\S})$ of $X$.
        Then, $\phi_{K,X,Y}$ is natural in $K$, $X$ and $Y$.
    \item
        For each $(X,K)\in\S\times\sSet$,
        we fix a $K$-path
        $(X^K,\beta_{K,X},\{\psi_{K,X,Y}\}_{Y\in\S})$ of $X$.
        Then, $\psi_{K,Y,X}$ is natural in $K$, $X$ and $Y$.
    \item
        There are three adjunctions:
        \[
          \xymatrix{
            \sSet \ar@<0.8ex>[rr]^{X\otimes (-)}
              &&  \S \ar@<0.6ex>[ll]^{\uS(X, -)},}
          \xymatrix{
            \sSet \ar@<0.8ex>[rr]^{X^{(-)}}
              && \S^{\op} \ar@<0.6ex>[ll]^{\uS(-,X)}}\textrm{ and }
          \xymatrix{
            \S \ar@<0.8ex>[rr]^{(-)\otimes K}
              && \S \ar@<0.6ex>[ll]^{(-)^K}}
        \]
  \end{enumerate}
\end{theorem}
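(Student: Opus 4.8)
The plan is to reduce all three parts to a single reformulation of $\phi_{K,X,Y}$ and $\psi_{K,Y,X}$ as adjoint transposes. Internal to $\sSet$ there is the adjunction $(-)\times K\dashv\usSet(K,-)$ with counit the evaluation map $\ev$, so that a simplicial map $g\colon A\rightarrow\usSet(K,B)$ amounts to its adjunct $\ev\circ(\id_K\times g)\colon K\times A\rightarrow B$. Read in this light, the commuting square of Definition~\ref{def:esht.1.2}(1)(c) says precisely that $\phi_{K,X,Y}$ is the adjunct of the composite
\[
  \ub_{X,X\otimes K,Y}\circ(\alpha_{K,X}\times\id_{\uS(X\otimes K,Y)})
  \colon K\times\uS(X\otimes K,Y)\rightarrow\uS(X,Y),
\]
and the square of Definition~\ref{def:esht.1.2}(2)(c) says that $\psi_{K,Y,X}$ is the adjunct of $\ub_{Y,X^K,X}\circ(\pr_2,\beta_{K,X}\cdot\pr_1)$. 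I would record this first: it trades the awkward occurrence of $\ev$ in the defining squares for the formal naturality of adjoint transposition.

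For part (1), the naturality of $\phi_{K,X,Y}$ in $K$, $X$ and $Y$ then follows by combining two facts: the naturality of the transposition bijection $\sSet(K\times A,B)\cong\sSet(A,\usSet(K,B))$ in $A$, $B$ and $K$; and the naturality of the family $\ub_{X,X\otimes K,Y}\circ(\alpha_{K,X}\times\id)$ in $X$, $K$ and $Y$, with the variances dictated by $X\otimes(-)$ and by $\uS$. For the second fact, the $Y$-variable is immediate from the bifunctoriality of the composition maps $\ub$; the $X$-variable and the $K$-variable use in addition the two compatibility squares of Theorem~\ref{thm:esht.1.2}(1)(b), which relate $\alpha_{K,X}$ to $\alpha_{K,X'}$ via $f\otimes K$ and to $\alpha_{L,X}$ via $X\otimes u$. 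In each of the three cases the claimed identity of maps out of $K\times\uS(X\otimes K,Y)$ unwinds to the associativity and functoriality of $\ub$ together with one of those squares. Part (2) is proved identically, with $\beta$, $\psi$ and Theorem~\ref{thm:esht.1.2}(2)(b) in place of $\alpha$, $\phi$ and Theorem~\ref{thm:esht.1.2}(1)(b), and with the contravariance in the $\S$-argument threaded through the path object.

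For part (3) little remains. The functors $X\otimes(-)\colon\sSet\rightarrow\S$, $(-)\otimes K\colon\S\rightarrow\S$, $X^{(-)}$ and $(-)^K$ are the partial functors of $(-)\otimes(-)$ and $(-)^{(-)}$ supplied by Theorem~\ref{thm:esht.1.2}, while $\uS(X,-)\colon\S\rightarrow\sSet$ and $\uS(-,X)\colon\S^{\op}\rightarrow\sSet$ are the evident simplicial hom functors. Evaluating the simplicial isomorphism $\phi_{K,X,Y}$ in simplicial degree $0$ and using $\usSet(K,\uS(X,Y))_0=\sSet(K,\uS(X,Y))$ produces a bijection $\S(X\otimes K,Y)\cong\sSet(K,\uS(X,Y))$, natural in $K$ and $Y$ by part (1); this is the first adjunction. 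The second is obtained the same way from $\psi_{K,Y,X}$, with $\S^{\op}$ in place of $\S$. For the third, the composite isomorphism $\psi_{K,X,Y}^{-1}\circ\phi_{K,X,Y}\colon\uS(X\otimes K,Y)\rightarrow\uS(X,Y^K)$ is natural in $X$ and $Y$ by parts (1) and (2), and its degree-$0$ component is the adjunction bijection $\S(X\otimes K,Y)\cong\S(X,Y^K)$.

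The whole argument is essentially bookkeeping: the transpose reformulation makes the ``$\ev$-side'' of every diagram automatic, so what is left is to transport the compatibility squares of Theorem~\ref{thm:esht.1.2} across the composition maps $\ub$. The point that requires care --- and which I expect to be the only real friction --- is the path-object case, where the twisted map $(\pr_2,\beta_{K,X}\cdot\pr_1)$, rather than a clean product of $\beta_{K,X}$ with an identity, must be matched against $\ub$, and where the contravariance in the $\S$-variable has to be tracked consistently through all of parts (1)--(3).
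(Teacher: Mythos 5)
Your proposal is correct and follows essentially the same route as the paper: the paper likewise reduces each naturality square for $\phi_{K,X,Y}$ and $\psi_{K,Y,X}$ to an equation between their transposes under $\sharp_{K,X,Y}$ (the adjunction $(-)\times K\dashv\usSet(K,-)$, with the $\ev$-side handled by Lemmas~\ref{lem:esht.1.15} and~\ref{lem:esht.1.16}), and then verifies that equation using \textbf{S4}/\textbf{S5} together with the compatibility squares \eqref{eqn:esht.1.1}, \eqref{eqn:esht.1.2}, \eqref{eqn:esht.1.3} from Theorem~\ref{thm:esht.1.2}, before deducing part (3) from parts (1) and (2) exactly as you do. The only difference is one of packaging: you invoke the naturality of the transposition bijection wholesale, where the paper uses only that $\sharp$ is injective and checks the transposed identities by explicit element-wise computation.
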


\begin{remark}
  Every result in this note is valid
  when $\sSet$ is replaced by the category of the finite simplicial sets.
\end{remark}

\begin{xnotations}
  \text{}
  \begin{itemize}
    \item we use $\sSet$ for the category of simplicial sets
    \item we use $\sigma^n_0:[n]\rightarrow[0]$ for the unique map from $[n]$ to $[0]$ in the category $\Delta$.
    \item we use $\bullet$ for the composition of arrows in simplicial categories.
  \end{itemize}
\end{xnotations}

\section{Review on Simplicial Category}

In this section, we recall the simplicial categories
and state some of their basic properties without proofs.
For references, we refer to \cite{gabriel-zisman-67},
\cite{quillen-67} and \cite{goerss-jardine-99}.

\subsection{The Axioms}

A simplicial category is a category enriched in the category
$\sSet$ of simplicial sets.
Here, we use the following equivalent definition in \cite{quillen-67},
which is convenient for us to prove the results
stated in section one.

\begin{definition}
\label{def:esht.1.1}
  A simplicial category is a  category $\S$ with the following structure
  \textbf{S1}-\textbf{S3} satisfying \textbf{S4} and
  \textbf{S5}.
  \begin{description}
    \item[S1] Functor
        $\uS(-,-):\S^{\op}\times\S \rightarrow \sSet$.
    \item[S2] For every $X,Y,Z\in\S$, there is a morphism
        \begin{equation*}
          \ub_{X,Y,Z}:\uS(X,Y)\times\uS(Y,Z) \rightarrow \uS(X,Z).
        \end{equation*}
        of $\sSet$.
        (For simplicity, we denote $\ub_{X,Y,Z}$ by $\ub$.)
    \item[S3]
        A natural isomorphism
        \begin{equation*}
          \widetilde{(\;\;)}:\S(-,-) \rightarrow \uS(-,-)_0
        \end{equation*}
        between two functors $\S(-,-)$ and $\uS(-,-)_0$
        from $\S^{\op}\times\S$ to $\Set$.
    \item[S4]
        If $f\in\uS(X,Y)_n$,
        $g\in\uS(Y,Z)_n$, $h\in\uS(Z,W)_n$, then
        \begin{equation*}
          (h\ub_n g)\ub_n f=h\ub_n(g\ub_n f)
        \end{equation*}
        holds.
    \item[S5]
        \begin{enumerate}
          \item If $f\in\uS(X,Y)_n$,
              $g\in\S(Y,Z)$, then
              \begin{equation*}
                \uS(X,g)_n(f)=(\sigma_0^n)^*(\widetilde{g})\ub_n f
              \end{equation*}
              holds.
          \item If $f\in\S(X,Y)$,
              $g\in\uS(Y,Z)_n$, then
              \begin{equation*}
                \uS(f,Z)_n(g)= g\ub_n (\sigma_0^n)^*(\widetilde{f})
              \end{equation*}
              holds.
        \end{enumerate}
  \end{description}
\end{definition}

\begin{remark}
  Let $\S$ be a simplicial category.
  If $X,Y,Z\in\S$, then
  \begin{equation*}
    \xymatrix{
      \uS(X,Y)_0\times \uS(Y,Z)_0 \ar[r]^(0.6){\ub_0} & \uS(X,Z)_0\\
      \S(X,Y)\times \S(Y,Z) \ar[r]^(0.6){\bullet}
        \ar[u]^{\widetilde{(\;\;)}\times\widetilde{(\;\;)}}
        & \S(X,Z) \ar[u]_{\widetilde{(\;\;)}}}
  \end{equation*}
  is a commutative diagram.
\end{remark}

\begin{definition}
  Let $\S$ be a simplicial category.
  Let $f\in\uS(X,Y)_n$ and $g\in\uS(Y,Z)_n$.
  \begin{enumerate}
    \item
        We indicate $f\in\uS(X,Y)_n$ by
        \[
          \xymatrix{f:X\ar[r]|(0.55)n &Y}\textrm{, or }
          \xymatrix{X\ar[r]|n^f &Y}.
        \]
    \item
        We indicate $g\ub_n f\in\uS(X,Z)_n$ by
        \[
          \xymatrix{X\ar[r]|n^f & Y \ar[r]|n^g & Z}.
        \]
  \end{enumerate}
\end{definition}

\subsection{$\sSet$ as a Simplicial Category}
\label{subsubsec:esht.1.2.1}

\begin{definition}[\textbf{S1}]
\label{def:esht.1.9}
  Let $X,Y,Z\in\sSet$.
  \begin{enumerate}
    \item We define an object $\usSet(X,Y)$ of $\sSet$ by
        \begin{equation*}
          \usSet(X,Y)_n:=\sSet(X\times\Delta[n],Y).
        \end{equation*}
        and
        \begin{equation*}
          \usSet(X,Y)(\theta):=(\id_X\times\Delta[\theta])^*
        \end{equation*}
        where $\theta\in\Delta(m,n)$.
    \item
        Let $u\in\sSet(Z,X)$. We define a morphism
        \[
          \usSet(u,Y):\usSet(X,Y)\rightarrow\usSet(Z,Y)
        \]
        of $\sSet$ by
        \begin{equation*}
          \usSet(u,Y)_n(f):=f\bullet(u\times\id_{\Delta[n]})
        \end{equation*}
        where $f\in\usSet(X,Y)_n$.
    \item
        Let $u\in\sSet(Y,Z)$.
        We define a morphism
        \[
          \usSet(X,u):\usSet(X,Y)\rightarrow\usSet(X,Z)
        \]
        of $\sSet$ by
        \[
          \usSet(X,u)_n(f):=u\bullet f
        \]
        where $f\in\usSet(X,Y)_n$.
  \end{enumerate}
\end{definition}

\begin{definition}[\textbf{S2}]
\label{def:esht.1.10}
  Let $X,Y,Z\in\sSet$.
  We define a morphism
  \begin{equation*}
    \ub_{X,Y,Z}:\usSet(X,Y)\times\usSet(Y,Z)\rightarrow \usSet(X,Z)
  \end{equation*}
  of $\sSet$ by
  \begin{equation*}
    (\ub_{X,Y,Z})_n(f,g):=g\bullet(f\times\id_{\Delta[n]})
      \bullet(\id_X\times\delta_{\Delta[n]}).
  \end{equation*}
  where \[\delta_{\Delta[n]}:\Delta[n]\rightarrow\Delta[n]\times \Delta[n]\]
  is the diagonal map.
\end{definition}

\begin{definition}[\textbf{S3}]
\label{def:esht.1.11}
  Let $X,Y\in\sSet$.
  We define a morphism
  \begin{equation*}
    \widetilde{(\;\;)}:\sSet(X,Y)\rightarrow \usSet(X,Y)_0
  \end{equation*}
  of $\Set$ by
  \begin{equation*}
    \widetilde{f}:=f\bullet r_X
  \end{equation*}
  where
  \[
    r_X:X\times\Delta[0]\rightarrow X
  \]
  is the canonical isomorphism.
\end{definition}

\begin{lemma}
  $\sSet$  with the structures in
  Definition \ref{def:esht.1.9} - Definition \ref{def:esht.1.11}
  is a simplicial category.
\end{lemma}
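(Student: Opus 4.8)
The plan is to carry out, one axiom at a time, the verification that the data of Definitions~\ref{def:esht.1.9}--\ref{def:esht.1.11} satisfy \textbf{S1}--\textbf{S5} of Definition~\ref{def:esht.1.1}. For \textbf{S1} I would first check that each $\usSet(X,Y)$ is a simplicial set: the assignment $\theta\mapsto(\id_X\times\Delta[\theta])^*$ is contravariantly functorial in $\theta$ because $\Delta[-]$ is a functor and pullback is contravariant. Next, $\usSet(u,Y)$ and $\usSet(X,u)$ are morphisms of $\sSet$, since on $n$-simplices the operations $(-)\bullet(u\times\id_{\Delta[n]})$ and $u\bullet(-)$ visibly commute with the structure maps $(\id_X\times\Delta[\theta])^*$; and both assignments are functorial in their free variable and commute with one another, so that $\usSet(-,-)$ is a genuine functor on $\sSet^{\op}\times\sSet$. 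All of this is immediate from the functoriality of finite products in $\sSet$.

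For \textbf{S2} I would check that $\ub_{X,Y,Z}$ is a morphism of $\sSet$, i.e.\ that for $\theta\in\Delta(m,n)$ one has $(\ub)_m\bullet\bigl(\usSet(X,Y)(\theta)\times\usSet(Y,Z)(\theta)\bigr)=\usSet(X,Z)(\theta)\bullet(\ub)_n$; expanding both sides from the definition of $(\ub)_n$, the only non-formal input is the naturality of the diagonal, $\delta_{\Delta[n]}\bullet\Delta[\theta]=(\Delta[\theta]\times\Delta[\theta])\bullet\delta_{\Delta[m]}$, after which the two sides coincide. For \textbf{S3}, the map $f\mapsto f\bullet r_X$ is a bijection $\sSet(X,Y)\rightarrow\usSet(X,Y)_0=\sSet(X\times\Delta[0],Y)$ because $r_X$ is an isomorphism, and it is natural in $X$ and in $Y$ (compatibility with $\usSet(u,Y)_0$ and $\usSet(X,v)_0$) by a short computation using only the naturality of $r_X$. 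One should also record here the commutative square in the remark following Definition~\ref{def:esht.1.1}, namely $\widetilde{g\bullet f}=\widetilde{g}\ub_0\widetilde{f}$; this is the degree-$0$ instance of the identity used below for \textbf{S5}.

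The real work is the bookkeeping in \textbf{S4} and \textbf{S5}. For associativity \textbf{S4}, unwinding $(h\ub_n g)\ub_n f$ and $h\ub_n(g\ub_n f)$ from the definition of $(\ub)_n$ and cancelling the identities $\pr_{12}\bullet(\id_X\times\delta_{\Delta[n]})=\id_{X\times\Delta[n]}$ reduces the claim to the coassociativity of the diagonal, $(\delta_{\Delta[n]}\times\id_{\Delta[n]})\bullet\delta_{\Delta[n]}=(\id_{\Delta[n]}\times\delta_{\Delta[n]})\bullet\delta_{\Delta[n]}$. For \textbf{S5}, the key observation is that for $g\in\sSet(Y,Z)$ the $n$-simplex $(\sigma_0^n)^*(\widetilde{g})\in\usSet(Y,Z)_n=\sSet(Y\times\Delta[n],Z)$ equals $g\bullet\pr_1$, because $r_Y\bullet(\id_Y\times\Delta[\sigma_0^n])=\pr_1$ on $Y\times\Delta[n]$; substituting this into the definition of $(\ub)_n$ and cancelling a diagonal against the appropriate projection gives $(\sigma_0^n)^*(\widetilde{g})\ub_n f=g\bullet f=\usSet(X,g)_n(f)$, which is \textbf{S5}(a). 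Part (b) is the mirror computation, using $(\sigma_0^n)^*(\widetilde{f})=f\bullet\pr_1$ and the cancellation $\pr_{13}\bullet(\id_X\times\delta_{\Delta[n]})=\id_{X\times\Delta[n]}$ to identify $g\ub_n(\sigma_0^n)^*(\widetilde{f})$ with $g\bullet(f\times\id_{\Delta[n]})=\usSet(f,Z)_n(g)$.

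I do not expect any conceptual obstacle here. The whole argument is powered by three elementary facts about the standard simplices $\Delta[n]$ --- the naturality of the diagonals $\delta_{\Delta[n]}$, their coassociativity, and the identity $r_X\bullet(\id_X\times\Delta[\sigma_0^n])=\pr_1$ --- together with the functoriality of finite products. The one real difficulty is disciplined notation: one must keep careful track of the various projections on the iterated products $X\times\Delta[n]\times\Delta[n]$, and of which copy of $\Delta[n]$ plays which role, so that the cancellations above are applied in the correct places.
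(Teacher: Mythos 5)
Your verification is correct. Note, however, that the paper deliberately states this lemma \emph{without} proof (Section 2 announces that basic properties of simplicial categories are recalled ``without proofs''), so there is no argument in the paper to compare against; your proposal supplies the missing routine check. The decomposition you give is the expected one, and you correctly isolate the three non-formal inputs: naturality and coassociativity of the diagonals $\delta_{\Delta[n]}$ (for \textbf{S2} and \textbf{S4}) and the identity $r_Y\bullet(\id_Y\times\Delta[\sigma^n_0])=\pr_1$, which gives $(\sigma^n_0)^*(\widetilde{g})=g\bullet\pr_1$ and makes \textbf{S5} a one-line cancellation of a diagonal against a projection.
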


\subsection{$\ev_{X,Y}$ and $\sharp$}

\begin{definition}
\label{def:esht.1.14}
  Let $X,Y\in\sSet$. We define a morphism
  \begin{equation*}
    \ev_{X,Y}:X\times\usSet(X,Y)\rightarrow Y
  \end{equation*}
  of $\sSet$ by
  \begin{equation*}
    (\ev_{X,Y})_n(x_n,f):=f_n(x_n,\id_n).
  \end{equation*}
\end{definition}

\begin{lemma}
\label{lem:esht.1.15}
  Let $X,Y,Z\in\sSet$. If $f\in\sSet(Y,Z)$, then
  \[
    \xymatrix{
      X\times \usSet(X,Y) \ar[rr]^(0.65){\ev_{X,Y}}
        \ar[d]_{\id_X\times \usSet(X,f)}
        && Y \ar[d]^{f}\\
      X\times \usSet(X,Z) \ar[rr]_(0.65){\ev_{X,Z}} && Z
      }
  \]
  is a commutative diagram.
\end{lemma}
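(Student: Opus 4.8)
\textbf{Proof proposal for Lemma~\ref{lem:esht.1.15}.}

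The plan is to reduce the assertion to an equality of functions on $n$-simplices and then unwind the defining formulas. Every arrow appearing in the square---$\ev_{X,Y}$, $\ev_{X,Z}$, $\id_X\times\usSet(X,f)$, and $f$---is a morphism of $\sSet$, so both composites $X\times\usSet(X,Y)\to Z$ are morphisms of $\sSet$ as well. Hence it suffices to check that the two composites agree levelwise, i.e.\ that for each $n\ge 0$ and each pair $(x_n,g)\in X_n\times\usSet(X,Y)_n$ the two resulting elements of $Z_n$ coincide. Recall that $\usSet(X,Y)_n=\sSet(X\times\Delta[n],Y)$, so $g$ is a morphism $g\colon X\times\Delta[n]\to Y$ of $\sSet$.

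Next I would compute the two composites separately. Going along the top and then down the right: by Definition~\ref{def:esht.1.14} we have $(\ev_{X,Y})_n(x_n,g)=g_n(x_n,\id_n)\in Y_n$, and applying $f$ gives $f_n\bigl(g_n(x_n,\id_n)\bigr)\in Z_n$. Going down the left and then along the bottom: by Definition~\ref{def:esht.1.9}(3) we have $\usSet(X,f)_n(g)=f\bullet g\in\sSet(X\times\Delta[n],Z)$, so $(\id_X\times\usSet(X,f))_n(x_n,g)=(x_n,f\bullet g)$, and then by Definition~\ref{def:esht.1.14} again $(\ev_{X,Z})_n(x_n,f\bullet g)=(f\bullet g)_n(x_n,\id_n)$. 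Since $(f\bullet g)_n=f_n\circ g_n$, this last element equals $f_n\bigl(g_n(x_n,\id_n)\bigr)$, which matches the first computation.

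That completes the argument; there is essentially no obstacle here, only careful bookkeeping. The one point worth stating explicitly is the reduction in the first paragraph---that a diagram of morphisms of simplicial sets commutes as soon as it commutes on $n$-simplices for every $n$---after which the claim is a one-line unwinding of the formula $(f\bullet g)_n=f_n\circ g_n$ for composition in $\sSet$ together with the definitions of $\ev$ and $\usSet(X,f)$.
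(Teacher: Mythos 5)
Your proof is correct. The paper states Lemma~\ref{lem:esht.1.15} without proof (it appears in the review section, whose results are explicitly stated without proofs), and your argument --- reducing to a levelwise check on $n$-simplices and unwinding Definitions~\ref{def:esht.1.9} and~\ref{def:esht.1.14} to see that both composites send $(x_n,g)$ to $f_n\bigl(g_n(x_n,\id_n)\bigr)$ --- is exactly the intended direct verification.
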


\begin{lemma}
\label{lem:esht.1.16}
  Let $X,Y,Z\in\sSet$. If $f\in\sSet(X,Y)$, then
  \[
    \xymatrix{
      X\times \usSet(Y,Z) \ar[rr]^{f\times \id_{\usSet(Y,Z)}}
        \ar[d]_{\id_X\times \usSet(f,Z)}
        && Y\times \usSet(Y,Z) \ar[d]^{\ev_{Y,Z}}\\
      X\times \usSet(X,Z) \ar[rr]_{\ev_{X,Z}} && Z
      }
  \]
  is commutative diagram.
\end{lemma}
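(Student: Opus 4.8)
The plan is to establish the commutativity by a direct computation at the level of $n$-simplices. Both composites around the square are morphisms of $\sSet$ with source $X\times\usSet(Y,Z)$ and target $Z$, so it suffices to check that their $n$-th components agree on an arbitrary $n$-simplex of the source, for every $n\ge 0$.

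First I would fix $n$ and write a typical $n$-simplex of $X\times\usSet(Y,Z)$ as a pair $(x_n,g)$ with $x_n\in X_n$ and $g\in\usSet(Y,Z)_n=\sSet(Y\times\Delta[n],Z)$. Tracing the upper-right path, $f\times\id_{\usSet(Y,Z)}$ sends $(x_n,g)$ to $(f_n(x_n),g)$, and then $\ev_{Y,Z}$ yields $g_n(f_n(x_n),\id_n)$ by Definition~\ref{def:esht.1.14}. Tracing the lower-left path, $\id_X\times\usSet(f,Z)$ sends $(x_n,g)$ to $(x_n,\,g\bullet(f\times\id_{\Delta[n]}))$ by Definition~\ref{def:esht.1.9}(2), and then $\ev_{X,Z}$ yields $\big(g\bullet(f\times\id_{\Delta[n]})\big)_n(x_n,\id_n)$.

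The only substantive step is to identify this last expression with $g_n(f_n(x_n),\id_n)$. Since passing to $n$-th components is functorial, the $n$-th component of $g\bullet(f\times\id_{\Delta[n]})$ is the set map $g_n\circ(f\times\id_{\Delta[n]})_n=g_n\circ\big(f_n\times(\id_{\Delta[n]})_n\big)$; as $(\id_{\Delta[n]})_n$ is the identity of $\Delta[n]_n$, evaluating at $(x_n,\id_n)$ produces $g_n(f_n(x_n),\id_n)$, exactly the value from the upper-right path. Hence the two composites coincide on each $n$-simplex and the square commutes. I do not anticipate any genuine obstacle: the argument is purely an unwinding of the defining formulas for $\ev$ and for $\usSet(f,Z)$, and the single point needing care is the bookkeeping of simplices in the products $X\times\Delta[n]$ and $Y\times\Delta[n]$, together with the observation that the identity map of $\Delta[n]$ fixes the tautological top simplex $\id_n\in\Delta[n]_n$.
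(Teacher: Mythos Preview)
Your argument is correct: both composites send an $n$-simplex $(x_n,g)$ to $g_n(f_n(x_n),\id_n)$, exactly as you compute from Definitions~\ref{def:esht.1.9}(2) and~\ref{def:esht.1.14}. The paper itself does not supply a proof of this lemma---Section~2 explicitly records these basic facts about $\sSet$ ``without proofs''---so there is no alternative approach to compare against; your direct elementwise verification is the expected one.
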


\begin{definition}
\label{def:esht.1.16}
  Let $K,X,Y\in\sSet$.
  We define a morphism
  \begin{equation*}
    \sharp_{K,X,Y}:\sSet(K,\usSet(X,Y))\rightarrow \sSet(X\times K,Y)
  \end{equation*}
  of $\sSet$ by
  \begin{equation*}
    \sharp_{K,X,Y}(u):=\ev_{X,Y}\bullet(\id_X\times u).
  \end{equation*}
  I.e.,
  \begin{equation*}
    \xymatrix{
      X\times K
        \ar[d]_{\id_X\times u} \ar[drrr]|{\sharp_{K,X,Y}(u)}
        &&& \\
      X\times\usSet(X,Y) \ar[rrr]_(0.58){\ev_{X,Y}} &&& Y}
  \end{equation*}
  is a commutative diagram and
  \begin{equation*}
    (\sharp_{K,X,Y}(u))_n(x_n,k_n)=(u_n(k_n))_n(x_n,\id_n)
  \end{equation*}
  holds.
\end{definition}

\begin{lemma}
\label{lem:esht.1.18}
  For every $K,X,Y\in\sSet$,
  $\sharp_{K,X,Y}$ in Definition \ref{def:esht.1.16}
  is an isomorphism and
  natural in $K$, $X$ and $Y$.
\end{lemma}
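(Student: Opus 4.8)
The plan is to exhibit an explicit two-sided inverse to $\sharp_{K,X,Y}$ and then to deduce naturality from the naturality of $\ev$ proved in Lemmas~\ref{lem:esht.1.15} and~\ref{lem:esht.1.16}. I would define
\[
  \flat_{K,X,Y}:\sSet(X\times K,Y)\rightarrow\sSet(K,\usSet(X,Y))
\]
by declaring, for $g\in\sSet(X\times K,Y)$ and $k_n\in K_n$, that $(\flat_{K,X,Y}(g))_n(k_n)\in\usSet(X,Y)_n=\sSet(X\times\Delta[n],Y)$ is the morphism given by
\[
  ((\flat_{K,X,Y}(g))_n(k_n))_m(x_m,\theta):=g_m(x_m,K(\theta)(k_n)),\qquad x_m\in X_m,\ \theta\in\Delta[n]_m;
\]
equivalently, $(\flat_{K,X,Y}(g))_n(k_n)=g\bullet(\id_X\times\widehat{k_n})$, where $\widehat{k_n}\colon\Delta[n]\rightarrow K$ is the map classifying $k_n$ under the Yoneda bijection $K_n\cong\sSet(\Delta[n],K)$. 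First I would check the two well-definedness points: that each $(\flat_{K,X,Y}(g))_n(k_n)$ really is a morphism of simplicial sets, and that the assignment $k_n\mapsto(\flat_{K,X,Y}(g))_n(k_n)$ commutes with the structure maps $\usSet(X,Y)(\psi)=(\id_X\times\Delta[\psi])^*$ of $\usSet(X,Y)$. Both reduce to the contravariant functoriality $K(\psi\circ\theta)=K(\theta)\circ K(\psi)$ of $K$ together with $\Delta[\psi]_l(\theta)=\psi\circ\theta$, so they are routine.

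Next I would verify that $\sharp_{K,X,Y}$ and $\flat_{K,X,Y}$ are mutually inverse, using the pointwise formula $(\sharp_{K,X,Y}(u))_n(x_n,k_n)=(u_n(k_n))_n(x_n,\id_n)$ from Definition~\ref{def:esht.1.16}. The composite $\sharp_{K,X,Y}\circ\flat_{K,X,Y}$ collapses at once, because $K(\id_n)=\id_{K_n}$. For $\flat_{K,X,Y}\circ\sharp_{K,X,Y}$, given $u\in\sSet(K,\usSet(X,Y))$ and writing $g=\sharp_{K,X,Y}(u)$, one computes, for $\theta\in\Delta[n]_m$,
\[
  ((\flat_{K,X,Y}(g))_n(k_n))_m(x_m,\theta)=g_m(x_m,K(\theta)(k_n))=(u_m(K(\theta)(k_n)))_m(x_m,\id_m);
\]
since $u$ is a morphism of simplicial sets, $u_m(K(\theta)(k_n))=(\id_X\times\Delta[\theta])^*(u_n(k_n))=u_n(k_n)\bullet(\id_X\times\Delta[\theta])$, and evaluating this at $(x_m,\id_m)$ while using $\Delta[\theta]_m(\id_m)=\theta$ yields $(u_n(k_n))_m(x_m,\theta)$. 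Hence $(\flat_{K,X,Y}(g))_n(k_n)=u_n(k_n)$ for all $n$ and all $k_n$, so $\flat_{K,X,Y}(g)=u$. This proves $\sharp_{K,X,Y}$ is an isomorphism.

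Finally, naturality in each of $K$, $X$ and $Y$ follows formally from the closed formula $\sharp_{K,X,Y}(u)=\ev_{X,Y}\bullet(\id_X\times u)$. Naturality in $Y$, with respect to $q\in\sSet(Y,Y')$ acting on targets by post-composition with $q$ and on sources by post-composition with $\usSet(X,q)$, is exactly Lemma~\ref{lem:esht.1.15}; naturality in $X$, with respect to $p\in\sSet(X',X)$, follows from Lemma~\ref{lem:esht.1.16} together with the interchange law $(\id_X\times u)\bullet(p\times\id_K)=(p\times\id_{\usSet(X,Y)})\bullet(\id_{X'}\times u)$; and naturality in $K$, with respect to $v\in\sSet(K',K)$, is immediate from $(\id_X\times u)\bullet(\id_X\times v)=\id_X\times(u\bullet v)$. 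I do not expect a genuine obstacle here: the statement is the exponential law $\sSet(X\times K,Y)\cong\sSet(K,\usSet(X,Y))$ written out in coordinates, and the only places that really call for care are the variance bookkeeping in the two well-definedness checks and in the identity $\flat_{K,X,Y}\circ\sharp_{K,X,Y}=\id$. One could instead write $K$ as a colimit of representables and combine the Yoneda lemma with the fact that $X\times(-)$ preserves colimits, but that is heavier machinery than the direct computation, so I would keep the hands-on argument.
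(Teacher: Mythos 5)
Your proposal is correct, but there is nothing in the paper to compare it against: Lemma~\ref{lem:esht.1.18} sits in the review section, which the author explicitly says states basic properties \emph{without proofs}, deferring to \cite{gabriel-zisman-67}, \cite{quillen-67} and \cite{goerss-jardine-99}. What you have written is the standard self-contained argument for the exponential law $\sSet(X\times K,Y)\cong\sSet(K,\usSet(X,Y))$: the explicit inverse $\flat_{K,X,Y}(g)_n(k_n)=g\bullet(\id_X\times\widehat{k_n})$ is the right one, the two well-definedness checks and the two composite computations all go through (the key identities $\widehat{k_n}\bullet\Delta[\psi]=\widehat{K(\psi)(k_n)}$ and $\Delta[\theta]_m(\id_m)=\theta$ are exactly what is needed), and reducing naturality in $Y$ and $X$ to Lemmas~\ref{lem:esht.1.15} and~\ref{lem:esht.1.16} plus the interchange law is clean and matches how the paper itself uses those two lemmas later in the proof of Theorem~\ref{thm:esht.1.3}. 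Your closing remark is also consistent with the paper's own aside that only the isomorphism part of the lemma is actually used, so the direct computation is the right level of machinery here.
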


\begin{remark}
  In this note, we only use the fact that
  $\sharp_{K,X,Y}$ is an isomorphism.
\end{remark}

\section{Proofs}

\begin{lemma}
\label{lem:esht.1.24}
  Let $\S$ be a simplicial category.
  Let $K\in\sSet$. Let $X,Y,Z\in\S$.
  If two morphisms
  \[
    \alpha:K\rightarrow\uS(X,Z)
  \]
  and
  \[
    \phi:\uS(Z,Y)\rightarrow \usSet(K,\uS(X,Y))
  \]
  of $\sSet$ satisfy
  \[
    \sharp_{\uS(Z,Y),K,\uS(X,Y)}(\phi)
    =(\ub_{X,Z ,Y})\bullet(\alpha\times\id_{\uS(Z,Y)})
  \]
  then, for every $f\in\S(Z,Y)$,
  \[
    (\phi_0)(\widetilde{f})=\widetilde{\uS(X,f)\bullet\alpha}
  \]
  holds.
\end{lemma}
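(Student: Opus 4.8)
The plan is to deduce the identity of $0$-simplices from the hypothesized identity of morphisms $K\times\uS(Z,Y)\to\uS(X,Y)$ by evaluating both sides on suitably chosen simplices. Since $\phi$ is a morphism of $\sSet$ and a map out of $\Delta[0]$ is pinned down by a single $0$-simplex, the $n$-simplices through which $\phi_0(\widetilde{f})$ is ``visible'' are the degeneracies $(\sigma_0^n)^*\widetilde{f}=\uS(Z,Y)(\sigma_0^n)(\widetilde{f})\in\uS(Z,Y)_n$: naturality of $\phi$ gives $\phi_n\bigl((\sigma_0^n)^*\widetilde{f}\bigr)=\usSet(K,\uS(X,Y))(\sigma_0^n)\bigl(\phi_0(\widetilde{f})\bigr)=\phi_0(\widetilde{f})\bullet\bigl(\id_K\times\Delta[\sigma_0^n]\bigr)$ by Definition~\ref{def:esht.1.9}. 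So I would evaluate the hypothesis $\sharp_{\uS(Z,Y),K,\uS(X,Y)}(\phi)=\ub_{X,Z,Y}\bullet(\alpha\times\id_{\uS(Z,Y)})$ at the $n$-simplex $(k_n,(\sigma_0^n)^*\widetilde{f})$ of $K\times\uS(Z,Y)$, for an arbitrary $k_n\in K_n$ and every $n$.

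On the left, unwinding $\sharp$ through the simplexwise formula of Definition~\ref{def:esht.1.16} and then substituting the naturality computation above (together with $(\Delta[\sigma_0^n])_n(\id_n)=\sigma_0^n$) collapses the left side to $\bigl(\phi_0(\widetilde{f})\bigr)_n(k_n,\sigma_0^n)$. On the right, the composite is $\bigl((\sigma_0^n)^*\widetilde{f}\bigr)\ub_n\alpha_n(k_n)$, and applying axiom \textbf{S5}(1) to $\alpha_n(k_n)\in\uS(X,Z)_n$ and $f\in\S(Z,Y)$ rewrites this as $\uS(X,f)_n\bigl(\alpha_n(k_n)\bigr)=\bigl(\uS(X,f)\bullet\alpha\bigr)_n(k_n)$. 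Hence $\bigl(\phi_0(\widetilde{f})\bigr)_n(k_n,\sigma_0^n)=\bigl(\uS(X,f)\bullet\alpha\bigr)_n(k_n)$ for all $n$ and all $k_n\in K_n$.

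To conclude, by Definition~\ref{def:esht.1.11} one has $\widetilde{\uS(X,f)\bullet\alpha}=\bigl(\uS(X,f)\bullet\alpha\bigr)\bullet r_K$ with $r_K:K\times\Delta[0]\to K$ the canonical isomorphism, whose value on the $n$-simplex $(k_n,\sigma_0^n)$ is precisely $\bigl(\uS(X,f)\bullet\alpha\bigr)_n(k_n)$; and since $\Delta[0]_n=\{\sigma_0^n\}$, every $n$-simplex of $K\times\Delta[0]$ has the form $(k_n,\sigma_0^n)$, so the equality of values established above for all $n$ forces $\phi_0(\widetilde{f})=\widetilde{\uS(X,f)\bullet\alpha}$ in $\sSet(K\times\Delta[0],\uS(X,Y))$. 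I expect no genuine obstacle: the whole argument is bookkeeping, and the only places one can slip are in matching the roles of the three objects in $\sharp_{\uS(Z,Y),K,\uS(X,Y)}$ and in keeping the degeneracy $\sigma_0^n$ distinct from the induced map $\Delta[\sigma_0^n]$.
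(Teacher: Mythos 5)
Your proposal is correct and follows essentially the same route as the paper: evaluate the hypothesis at the simplices $(k_n,(\sigma_0^n)^*\widetilde{f})$, use simpliciality of $\phi$ and the formula for $\sharp$ to reduce the left side to $\bigl(\phi_0(\widetilde{f})\bigr)_n(k_n,\sigma_0^n)$, and use \textbf{S5}(a) to reduce the right side to $\bigl(\uS(X,f)\bullet\alpha\bigr)_n(k_n)$. The only difference is that you spell out the final identification of $0$-simplices of $\usSet(K,\uS(X,Y))$ with maps $K\times\Delta[0]\to\uS(X,Y)$, which the paper leaves implicit.
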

\begin{proof}
  Let $k_n\in K_n$.
  Then
  \begin{alignat*}{1}
    (\sharp_{\uS(Z,Y),K,\uS(X,Y)}(\phi))_n
      (k_n,(\sigma^n_0)^*(\widetilde{f}))
    =& (\phi_n((\sigma^n_0)^*(\widetilde{f})))_n(k_n,\id_n)\\
    =&((\sigma^n_0)^*((\phi_0)(\widetilde{f})))_n(k_n,\id_n)\\
    =&((\phi_0)(\widetilde{f}))_n(k_n,\sigma^n_0)
  \end{alignat*}
  and
  \begin{alignat*}{1}
    ((\ub_{X,Z ,Y})\bullet(\alpha\times\id_{\uS(Z,Y)}))_n
      (k_n,(\sigma^n_0)^*(\widetilde{f}))
    =& (\sigma^n_0)^*(\widetilde{f})\ub_{X,Z ,Y,n}\alpha_n(k_n)\\
    =&\uS(X,f)_n(\alpha_n(k_n))\\
    =&(\uS(X,f)\bullet\alpha)_n(k_n)\\
    =&\widetilde{(\uS(X,f)\bullet\alpha)}_n(k_n,\sigma^n_0)
  \end{alignat*}
  holds.
\end{proof}

The following lemma is an immediate consequence of
Lemma \ref{lem:esht.1.24}.

\begin{lemma}
\label{lem:esht.1.25}
  Let $\S$ be a simplicial category.
  Let $K,L\in\sSet$. Let $X\in\S$.
  We assume that a $K$-cylinder  and a $L$-cylinder of $X$ exists.
  If $u\in\sSet(K,L)$ and $f\in\S(X\otimes K,X\otimes L)$,
  then the following conditions are equivalent.
  \begin{enumerate}
    \item $(\phi_{K,X,X\otimes L})_0(\widetilde{f})
        =\widetilde{\alpha_{L,X}\bullet u}$.
    \item $\uS(X,f)\bullet\alpha_{K,X}=\alpha_{L,X}\bullet u$.
        I.e.,
        \begin{equation*}
          \xymatrix{
            K \ar[r]^(0.3){\alpha_{K,X}} \ar[d]_u
              & \uS(X,X\otimes K) \ar[d]^{\uS(X,f)}\\
            L \ar[r]_(0.3){\alpha_{L,X}} & \uS(X,X\otimes L)}
        \end{equation*}
        is a commutative diagram.
  \end{enumerate}
\end{lemma}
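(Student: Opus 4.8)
The plan is to obtain this by a single application of Lemma~\ref{lem:esht.1.24}. I would invoke that lemma with $Z:=X\otimes K$, $Y:=X\otimes L$, with $\alpha:=\alpha_{K,X}\colon K\to\uS(X,X\otimes K)$, and with $\phi:=\phi_{K,X,X\otimes L}\colon\uS(X\otimes K,X\otimes L)\to\usSet(K,\uS(X,X\otimes L))$. The first step is to check that the hypothesis of Lemma~\ref{lem:esht.1.24} is satisfied by this data. This is not an extra computation: it is exactly the commutativity of the structure square~(c) of the fixed $K$-cylinder of $X$, taken at the object $Y=X\otimes L$. Indeed, by Definition~\ref{def:esht.1.16} the left-then-bottom leg of that square is $\ev_{K,\uS(X,X\otimes L)}\bullet(\id_K\times\phi_{K,X,X\otimes L})=\sharp_{\uS(X\otimes K,X\otimes L),K,\uS(X,X\otimes L)}(\phi_{K,X,X\otimes L})$, while the top-then-right leg is $\ub_{X,X\otimes K,X\otimes L}\bullet(\alpha_{K,X}\times\id)$, so the square asserts precisely the identity required in the hypothesis of Lemma~\ref{lem:esht.1.24}.

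With that in hand, Lemma~\ref{lem:esht.1.24} gives, for the given $f\in\S(X\otimes K,X\otimes L)$, the identity $(\phi_{K,X,X\otimes L})_0(\widetilde f)=\widetilde{\uS(X,f)\bullet\alpha_{K,X}}$. Plugging this into condition~(1) rewrites (1) as $\widetilde{\uS(X,f)\bullet\alpha_{K,X}}=\widetilde{\alpha_{L,X}\bullet u}$ in $\usSet(K,\uS(X,X\otimes L))_0$. The last step is to remove the tildes: both sides are images under $\widetilde{(\;\;)}\colon\sSet(K,\uS(X,X\otimes L))\to\usSet(K,\uS(X,X\otimes L))_0$, which is bijective, being a component of the natural isomorphism of axiom~\textbf{S3} for $\sSet$ regarded as a simplicial category --- concretely the map $g\mapsto g\bullet r_K$ induced by the isomorphism $r_K\colon K\times\Delta[0]\to K$. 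Hence the displayed equality is equivalent to $\uS(X,f)\bullet\alpha_{K,X}=\alpha_{L,X}\bullet u$, that is, to condition~(2), which finishes the equivalence.

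Since the argument is just a substitution into Lemma~\ref{lem:esht.1.24} followed by cancellation of an invertible map, I do not expect a real obstacle; the proof should indeed be short, as the paper advertises. The one place to be careful is the bookkeeping around $\widetilde{(\;\;)}$: in condition~(1) the symbol $\widetilde f$ refers to axiom~\textbf{S3} for $\S$, whereas the final cancellation takes place through axiom~\textbf{S3} for $\sSet$, and these two instances should be kept apart. I would also double-check the notational matching between the evaluation map written $\ev_{K,\usSet(K,\uS(X,Y))}$ along the bottom of the square in Definition~\ref{def:esht.1.2} and the evaluation map $\ev_{K,\uS(X,Y)}$ that enters $\sharp$ in Definition~\ref{def:esht.1.16}, so that the hypothesis of Lemma~\ref{lem:esht.1.24} is recognized correctly.
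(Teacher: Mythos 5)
Your proposal is correct and is exactly the argument the paper intends: the paper proves this lemma only by declaring it ``an immediate consequence of Lemma~\ref{lem:esht.1.24}'', and you have supplied precisely the missing details --- verifying the hypothesis of that lemma via the structure square~(c) of the $K$-cylinder together with Definition~\ref{def:esht.1.16}, then cancelling the bijection $\widetilde{(\;\;)}$ from axiom~\textbf{S3} for $\sSet$. Your side remarks (the two distinct instances of $\widetilde{(\;\;)}$, and the subscript of $\ev$ in Definition~\ref{def:esht.1.2} being a typo for $\ev_{K,\uS(X,Y)}$) are both accurate.
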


\begin{definition}
  Let $\S$ be a simplicial category.
  Let $K,L\in\sSet$. Let $X\in\S$.
  We assume that a $K$-cylinder  and a $L$-cylinder of $X$ exists.
  Since $\phi_{K,X,X\otimes L}$ is an isomorphism,
  given any $u\in\sSet(K,L)$
  there exists a unique morphism $f\in\S(X\otimes K,X\otimes L)$
  satisfying the equivalent conditions of
  Lemma \ref{lem:esht.1.25}.
  We denote this morphism with
  \[
    X\otimes u:X\otimes K\rightarrow X\otimes L.
  \]
  In other words, we have the following commutative diagram.
  \begin{equation}
  \label{eqn:esht.1.1}
    \xymatrix{
      K \ar[r]^(0.3){\alpha_{K,X}} \ar[d]_u
        & \uS(X,X\otimes K) \ar[d]^{\uS(X,X\otimes u)}\\
      L \ar[r]_(0.3){\alpha_{L,X}} & \uS(X,X\otimes L)}
  \end{equation}
\end{definition}

The following lemma is also an immediate consequence of
Lemma \ref{lem:esht.1.24}.

\begin{lemma}
\label{lem:esht.1.27}
  Let $\S$ be a simplicial category.
  Let $K\in\sSet$. Let $X,Y\in\S$.
  We assume that a $K$-cylinder of $X$ and a $K$-cylinder of $Y$ exist.
  If $u\in\S(X,Y)$ and $f\in\S(X\otimes K,Y\otimes K)$, then
  the following conditions are equivalent.
  \begin{enumerate}
    \item $(\phi_{K,X,Y\otimes K})_0(\widetilde{f})
        =\widetilde{\uS(u,Y\otimes K)\bullet \alpha_{K,Y}}$.
    \item $\uS(X,f)\bullet\alpha_{K,X}=
        \uS(u,Y\otimes K)\bullet \alpha_{K,Y}$. I.e.,
        \begin{equation*}
          \xymatrix{
            K \ar[rr]^(0.4){\alpha_{K,X}} \ar[d]_{\alpha_{K,Y}}
              && \uS(X,X\otimes K) \ar[d]^{\uS(X,f)}\\
            \uS(Y,Y\otimes K) \ar[rr]_{\uS(u,Y\otimes K)}
              && \uS(X, Y\otimes K)}
        \end{equation*}
        is a commutative diagram.
  \end{enumerate}
\end{lemma}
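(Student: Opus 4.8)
The statement to prove is Lemma~\ref{lem:esht.1.27}, which is the ``naturality in the first variable'' analogue of Lemma~\ref{lem:esht.1.25}. The excerpt itself tells us the strategy: the author remarks that Lemma~\ref{lem:esht.1.27} is an immediate consequence of Lemma~\ref{lem:esht.1.24}. So the plan is to specialize Lemma~\ref{lem:esht.1.24} with the right choice of objects and morphisms.

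\begin{proof}[Proof sketch]
The plan is to apply Lemma~\ref{lem:esht.1.24} with the substitution
$Z := X\otimes K$, $Y := Y\otimes K$ (keeping $X$ as is),
$\alpha := \alpha_{K,X}:K\to\uS(X,X\otimes K)$,
and $\phi := \phi_{K,X,Y\otimes K}:\uS(X\otimes K,Y\otimes K)\to\usSet(K,\uS(X,Y\otimes K))$.
First I would check the hypothesis of Lemma~\ref{lem:esht.1.24}, namely that
$\sharp_{\uS(X\otimes K,Y\otimes K),K,\uS(X,Y\otimes K)}(\phi_{K,X,Y\otimes K})
= \ub_{X,X\otimes K,Y\otimes K}\bullet(\alpha_{K,X}\times\id)$;
this is precisely the commuting square in part~(c) of Definition~\ref{def:esht.1.2}
rewritten via Definition~\ref{def:esht.1.16} (the defining identity
$\sharp_{K,X,Y}(u)=\ev_{X,Y}\bullet(\id_X\times u)$ turns the $\ev$-square into a $\sharp$-equation).
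Granting that, Lemma~\ref{lem:esht.1.24} yields, for every $g\in\S(X\otimes K,Y\otimes K)$,
the identity $(\phi_{K,X,Y\otimes K})_0(\widetilde{g})=\widetilde{\uS(X,g)\bullet\alpha_{K,X}}$.

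With this identity in hand, the equivalence $(1)\Leftrightarrow(2)$ of
Lemma~\ref{lem:esht.1.27} follows by a short two-step argument.
Apply the identity to $g:=f$: it gives
$(\phi_{K,X,Y\otimes K})_0(\widetilde{f})=\widetilde{\uS(X,f)\bullet\alpha_{K,X}}$.
Now note that the map $\widetilde{(\;\;)}:\S(X,Y\otimes K)\to\uS(X,Y\otimes K)_0$ is a bijection
(it is an isomorphism by \textbf{S3}), so
$(\phi_{K,X,Y\otimes K})_0(\widetilde{f})=\widetilde{\uS(u,Y\otimes K)\bullet\alpha_{K,Y}}$
holds if and only if
$\widetilde{\uS(X,f)\bullet\alpha_{K,X}}=\widetilde{\uS(u,Y\otimes K)\bullet\alpha_{K,Y}}$,
which in turn holds if and only if
$\uS(X,f)\bullet\alpha_{K,X}=\uS(u,Y\otimes K)\bullet\alpha_{K,Y}$.
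That is exactly the equivalence $(1)\Leftrightarrow(2)$,
and the displayed square in~(2) is just a redrawing of the latter equation.

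The only genuine point requiring care — and hence the ``main obstacle,'' though it is minor — is matching the commuting square in Definition~\ref{def:esht.1.2}(c)
(which is phrased with $\ev_{K,\usSet(K,\uS(X,Y))}$ along the bottom)
with the hypothesis of Lemma~\ref{lem:esht.1.24}
(which is phrased with $\sharp$). One must unwind Definition~\ref{def:esht.1.16}
and observe that the square ``$\ev\bullet(\id\times\phi)=\ub\bullet(\alpha\times\id)$ after precomposing
appropriately'' is literally the statement $\sharp(\phi)=\ub\bullet(\alpha\times\id)$,
using $\sharp(\phi)=\ev\bullet(\id\times\phi)$. Everything else is a formal consequence of
Lemma~\ref{lem:esht.1.24} and the bijectivity of $\widetilde{(\;\;)}$;
no new simplicial identities are needed. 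In writing this up I would state the specialization
of Lemma~\ref{lem:esht.1.24} once, then dispatch the equivalence in two lines as above.
\end{proof}
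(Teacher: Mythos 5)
Your proposal is correct and matches the paper's intent exactly: the paper offers no written proof, declaring the lemma an immediate consequence of Lemma~\ref{lem:esht.1.24}, and your specialization ($Z:=X\otimes K$, target object $Y\otimes K$, $\alpha:=\alpha_{K,X}$, $\phi:=\phi_{K,X,Y\otimes K}$, hypothesis verified from Definition~\ref{def:esht.1.2}(c) via Definition~\ref{def:esht.1.16}) followed by injectivity of $\widetilde{(\;\;)}$ is precisely the intended argument. One small correction: the injectivity needed in your last step is that of $\widetilde{(\;\;)}:\sSet(K,\uS(X,Y\otimes K))\to\usSet(K,\uS(X,Y\otimes K))_0$ (\textbf{S3} for the simplicial category $\sSet$, i.e.\ precomposition with the isomorphism $r_K$), not of $\widetilde{(\;\;)}:\S(X,Y\otimes K)\to\uS(X,Y\otimes K)_0$ as written, since the two sides being compared are morphisms of simplicial sets $K\to\uS(X,Y\otimes K)$.
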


\begin{definition}
  Let $\S$ be a simplicial category.
  Let $K\in\sSet$. Let $X,Y\in\S$.
  We assume that a $K$-cylinder of $X$ and a $K$-cylinder of $Y$ exist.
  Since
  $\phi_{K,X,Y\otimes K}$ is an isomorphism,
  given any $u\in\S(X,Y)$
  there exists a unique morphism $f\in\S(X\otimes K,Y\otimes K)$
  satisfying the equivalent conditions of Lemma \ref{lem:esht.1.27}.
  We denote this morphism with
  \[
    u\otimes K:X\otimes K\rightarrow Y\otimes K.
  \]
  In other words, we have the following commutative diagram.
  \begin{equation}
  \label{eqn:esht.1.2}
    \xymatrix{
      K \ar[rr]^(0.4){\alpha_{K,X}} \ar[d]_{\alpha_{K,Y}}
        && \uS(X,X\otimes K) \ar[d]^{\uS(X,u\otimes K)}\\
      \uS(Y,Y\otimes K) \ar[rr]_{\uS(u,Y\otimes K)}
        && \uS(X, Y\otimes K)}
  \end{equation}
\end{definition}

\begin{lemma}
\label{lem:esht.1.30}
  Let $\S$ be a simplicial category.
  Let $K\in\sSet$. Let $X,Y\in\S$.
  We assume that a $K$-cylinder of $X$ and a $K$-cylinder of $Y$ exist.
  If $u\in\S(X,Y)$ and $k_n\in K_n$,
  \[
    (\sigma^n_0)^*(\widetilde{u\otimes K})\ub_n(\alpha_{K,X})_n(k_n)
    =(\alpha_{K,Y})_n(k_n)\ub_n(\sigma^n_0)^*(\widetilde{u})
  \]
  holds. I.e.,
  \begin{equation*}
    \xymatrix{
      X \ar[rr]^(0.4){(\alpha_{K,X})_n(k_n)}|n
        \ar[d]_{(\sigma^n_0)^*(\widetilde{u})\;}|n
        && X\otimes K \ar[d]^{\;(\sigma^n_0)^*(\widetilde{u\otimes K})}|n\\
      Y \ar[rr]_(0.4){(\alpha_{K,Y})_n(k_n)}|n
        && Y\otimes K}
  \end{equation*}
  is a commutative diagram.
\end{lemma}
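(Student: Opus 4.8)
The plan is to read off the asserted identity from the defining commutative square \eqref{eqn:esht.1.2} of $u\otimes K$, namely
\[
\uS(X,u\otimes K)\bullet\alpha_{K,X}=\uS(u,Y\otimes K)\bullet\alpha_{K,Y},
\]
by evaluating both maps of simplicial sets at $k_n\in K_n$ in degree $n$ and rewriting each of the two composites by a single clause of axiom \textbf{S5}. First I would rewrite the value of the left-hand composite at $k_n$ as $\uS(X,u\otimes K)_n\big((\alpha_{K,X})_n(k_n)\big)$. Since $(\alpha_{K,X})_n(k_n)\in\uS(X,X\otimes K)_n$ and $u\otimes K\in\S(X\otimes K,Y\otimes K)$, axiom \textbf{S5}(1) rewrites this as $(\sigma^n_0)^*(\widetilde{u\otimes K})\ub_n(\alpha_{K,X})_n(k_n)$, which is the left-hand side of the claim.

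Symmetrically, the value of the right-hand composite at $k_n$ equals $\uS(u,Y\otimes K)_n\big((\alpha_{K,Y})_n(k_n)\big)$; since $u\in\S(X,Y)$ and $(\alpha_{K,Y})_n(k_n)\in\uS(Y,Y\otimes K)_n$, axiom \textbf{S5}(2) rewrites this as $(\alpha_{K,Y})_n(k_n)\ub_n(\sigma^n_0)^*(\widetilde{u})$, the right-hand side of the claim. Both elements lie in $\uS(X,Y\otimes K)_n$, and they coincide because of \eqref{eqn:esht.1.2}; this is precisely the stated equation, and the displayed diagram is merely its translation into the arrow notation of Definition~\ref{def:esht.1.1}.

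There is no real obstacle here: the lemma is an immediate consequence of the definition of $u\otimes K$ together with the two clauses of \textbf{S5}. The only thing to keep straight is the variance --- that the left composite is the image of the morphism $u\otimes K$ of $\S$ under the covariant functor $\uS(X,-)$, so that \textbf{S5}(1) is the relevant clause, while the right composite is the image of the morphism $u$ of $\S$ under the contravariant functor $\uS(-,Y\otimes K)$, so that \textbf{S5}(2) is the relevant clause. Once this is observed, the computation is a one-line substitution on each side.
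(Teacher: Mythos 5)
Your proof is correct and follows exactly the paper's route: the paper's own proof is the one-line remark that the lemma "follows from the commutative diagram \eqref{eqn:esht.1.2}", and you have simply made explicit the evaluation at $k_n$ and the two applications of \textbf{S5} (with the variance correctly sorted out) that this remark leaves implicit.
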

\begin{proof}
  It follows from the commutative diagram \eqref{eqn:esht.1.2}.
\end{proof}

\begin{lemma}
\label{lem:esht.1.31}
  Let $\S$ be a simplicial category.
  Let $K\in\sSet$. Let $X,Y,Z\in\S$.
  We assume that $K$-path of $X$ exists.
  If two morphism
  \[
    \beta:K\rightarrow\uS(Z,X)
  \]
  and
  \[
    \psi:\uS(Y,Z)\rightarrow \usSet(K,\uS(Y,X))
  \]
  of $\sSet$ satisfy
  \[
    \sharp_{\uS(Y,Z),K,\uS(Y,X)}(\psi)=
      (\ub_{Y,Z,X})\bullet (\pr_2,\beta\cdot\pr_1)
  \]
  then, for every $f\in\S(Y,Z)$,
  \[
    (\psi_0)(\widetilde{f})=\widetilde{\uS(f,X)\bullet\beta}
  \]
  holds.
\end{lemma}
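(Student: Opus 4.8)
The statement is the path-object analogue of Lemma~\ref{lem:esht.1.24}, and the plan is to prove it by the same device. Fix $f\in\S(Y,Z)$ and $k_n\in K_n$, and evaluate both sides of the hypothesis --- the equality of $\sharp_{\uS(Y,Z),K,\uS(Y,X)}(\psi)$ and $(\ub_{Y,Z,X})\bullet(\pr_2,\beta\cdot\pr_1)$ as morphisms $K\times\uS(Y,Z)\to\uS(Y,X)$ --- on the $n$-simplex $(k_n,(\sigma^n_0)^*(\widetilde{f}))$ of $K\times\uS(Y,Z)$. Since $\Delta[0]_n$ is a singleton, two $0$-simplices of $\usSet(K,\uS(Y,X))$, viewed as morphisms $K\times\Delta[0]\to\uS(Y,X)$, coincide once they agree on every $n$-simplex $(k_n,\sigma^n_0)$; so, having computed the two evaluations, letting $n$ and $k_n$ vary will force $(\psi_0)(\widetilde{f})=\widetilde{\uS(f,X)\bullet\beta}$.

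On the left, Definition~\ref{def:esht.1.16} rewrites $(\sharp_{\uS(Y,Z),K,\uS(Y,X)}(\psi))_n(k_n,(\sigma^n_0)^*(\widetilde{f}))$ as $(\psi_n((\sigma^n_0)^*(\widetilde{f})))_n(k_n,\id_n)$; then, $\psi$ being a morphism of $\sSet$, it commutes with the operator $(\sigma^n_0)^*$, so this is $((\sigma^n_0)^*((\psi_0)(\widetilde{f})))_n(k_n,\id_n)$; finally, unwinding $(\sigma^n_0)^*$ on $\usSet(K,\uS(Y,X))$ via Definition~\ref{def:esht.1.9} turns it into $((\psi_0)(\widetilde{f}))_n(k_n,\sigma^n_0)$. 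This is, line for line, the left-hand computation in the proof of Lemma~\ref{lem:esht.1.24} with $\phi$ replaced by $\psi$. On the right, $(\pr_2,\beta\cdot\pr_1)$ carries $(k_n,(\sigma^n_0)^*(\widetilde{f}))$ to the pair $((\sigma^n_0)^*(\widetilde{f}),\beta_n(k_n))$, which $\ub_{Y,Z,X,n}$ composes to $\beta_n(k_n)\ub_n(\sigma^n_0)^*(\widetilde{f})$; by axiom~\textbf{S5}(2) this equals $\uS(f,X)_n(\beta_n(k_n))=(\uS(f,X)\bullet\beta)_n(k_n)=\widetilde{(\uS(f,X)\bullet\beta)}_n(k_n,\sigma^n_0)$. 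Equating the two computations completes the argument.

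I do not expect a genuine obstacle: the calculation is short and nearly a literal transcription of the proof of Lemma~\ref{lem:esht.1.24}. The one point needing care is the right-hand side --- the pair produced by $(\pr_2,\beta\cdot\pr_1)$ presents the factors to $\ub_{Y,Z,X}$ in the order $\uS(Y,Z)$ then $\uS(Z,X)$, so one must invoke the pre-composition clause \textbf{S5}(2) rather than the post-composition clause \textbf{S5}(1) used in the cylinder case. This asymmetry is exactly the passage to the opposite category: $(\beta,\psi)$ is an instance of $(\alpha,\phi)$ for $\S^{\op}$, and $(\ub_{Y,Z,X})\bullet(\pr_2,\beta\cdot\pr_1)$ is the $\S^{\op}$-form of $(\ub)\bullet(\alpha\times\id)$, so the lemma could alternatively be deduced from Lemma~\ref{lem:esht.1.24} once $\S^{\op}$ is verified to be a simplicial category.
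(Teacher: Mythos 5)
Your computation is correct and is essentially identical to the paper's own proof: both sides of the hypothesis are evaluated on the simplex $(k_n,(\sigma^n_0)^*(\widetilde{f}))$, the left side unwound exactly as in Lemma~\ref{lem:esht.1.24}, and the right side reduced via \textbf{S5}(2) to $\widetilde{(\uS(f,X)\bullet\beta)}_n(k_n,\sigma^n_0)$. Your explicit remark that $0$-simplices of $\usSet(K,\uS(Y,X))$ are determined by their values on the simplices $(k_n,\sigma^n_0)$ only makes explicit a step the paper leaves implicit.
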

\begin{proof}
  Let $k_n\in K_n$. Then
  \begin{alignat*}{1}
    (\sharp_{\uS(Y,Z),K,\uS(Y,X)}(\psi))_n
      (k_n,(\sigma^n_0)^*(\widetilde{f}))
    =& (\psi_n((\sigma^n_0)^*(\widetilde{f})))_n(k_n,\id_n)\\
    =&((\sigma^n_0)^*((\psi_0)(\widetilde{f})))_n(k_n,\id_n)\\
    =&((\psi_0)(\widetilde{f}))_n(k_n,\sigma^n_0)
  \end{alignat*}
  and
  \begin{alignat*}{1}
    ((\ub_{Y,Z,X})\bullet (\pr_2,\beta\cdot\pr_1))_n
      (k_n,(\sigma^n_0)^*(\widetilde{f}))
    =&\beta_n(k_n)\ub_n(\sigma^n_0)^*(\widetilde{f})\\
    =&\uS(f,X)_n(\beta_n(k_n))\\
    =&(\uS(f,X)\bullet\beta)_n(k_n)\\
    =&\widetilde{(\uS(f,X)\bullet\beta)}_n(k_n,\sigma^n_0)
  \end{alignat*}
  holds.
\end{proof}

The following lemma is an immediate consequence of
Lemma \ref{lem:esht.1.31}.

\begin{lemma}
\label{lem:esht.1.32}
  Let $\S$ be a simplicial category.
  Let $K,L\in\sSet$. Let $X\in\S$.
  We assume that a $K$-path of $X$ and a $L$-path of $X$ exist.
  If $u\in\sSet(K,L)$ and $f\in\S(X^L,X^K)$,
  then the following conditions are equivalent.
  \begin{enumerate}
    \item $(\psi_{K,X^L,X})_0(\widetilde{f})
        =\widetilde{\beta_{L,X}\bullet u}$.
    \item $\uS(f,X)\bullet\beta_{K,X}=\beta_{L,X}\bullet u$. I.e.,
        \begin{equation*}
          \xymatrix{
            K \ar[r]^(0.3){\beta_{K,X}} \ar[d]_u
              & \uS(X^K,X) \ar[d]^{\uS(f,X)}\\
            L \ar[r]_(0.3){\beta_{L,X}} & \uS(X^L,X)}
        \end{equation*}
        은 commutative diagram이다.
  \end{enumerate}
\end{lemma}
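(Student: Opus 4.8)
The plan is to obtain Lemma~\ref{lem:esht.1.32} as a direct specialization of Lemma~\ref{lem:esht.1.31}, in the same way that Lemma~\ref{lem:esht.1.25} was obtained from Lemma~\ref{lem:esht.1.24}. First I would apply Lemma~\ref{lem:esht.1.31} with $Y:=X^L$, $Z:=X^K$, $\beta:=\beta_{K,X}$ and $\psi:=\psi_{K,X^L,X}$. Here the objects $X^L$ and $X^K$ are available because an $L$-path and a $K$-path of $X$ are assumed to exist, and $\beta_{K,X}$, $\psi_{K,X^L,X}$ are structure maps of the fixed $K$-path. The hypothesis of Lemma~\ref{lem:esht.1.31},
\[
  \sharp_{\uS(X^L,X^K),K,\uS(X^L,X)}(\psi_{K,X^L,X})
    =\ub_{X^L,X^K,X}\bullet(\pr_2,\beta_{K,X}\cdot\pr_1),
\]
is exactly the commutative square in Definition~\ref{def:esht.1.2}(2)(c) read with $Y=X^L$, once one unwinds $\sharp$ through Definition~\ref{def:esht.1.16}. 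So Lemma~\ref{lem:esht.1.31} applies and produces, for every $f\in\S(X^L,X^K)$, the identity
\[
  (\psi_{K,X^L,X})_0(\widetilde{f})=\widetilde{\uS(f,X)\bullet\beta_{K,X}}.
\]

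With that identity the equivalence is immediate. Condition (1) says $(\psi_{K,X^L,X})_0(\widetilde{f})=\widetilde{\beta_{L,X}\bullet u}$, so it holds if and only if $\widetilde{\uS(f,X)\bullet\beta_{K,X}}=\widetilde{\beta_{L,X}\bullet u}$ in $\usSet(K,\uS(X^L,X))_0$. Now $\sSet$ is itself a simplicial category (Subsection~\ref{subsubsec:esht.1.2.1}), and on morphisms $K\to\uS(X^L,X)$ the map $\widetilde{(\;\;)}$ of Definition~\ref{def:esht.1.11} is post-composition with the invertible map $r_K$, hence a bijection (equivalently, it is a component of the natural isomorphism in axiom~\textbf{S3}). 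Therefore the last equality is equivalent to $\uS(f,X)\bullet\beta_{K,X}=\beta_{L,X}\bullet u$, which is condition (2), and both implications are established at once.

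There is no genuine obstacle: all the computation lives in Lemma~\ref{lem:esht.1.31}, and what remains is a change of variables together with the injectivity of $\widetilde{(\;\;)}$. The only point that needs a little care is the bookkeeping in the specialization of Lemma~\ref{lem:esht.1.31}: one must take its ``$Y$'' to be the path object $X^L$, so that the source $\uS(X^L,X^K)$ of $\psi_{K,X^L,X}$ and the target $\uS(X^L,X)$ of $\uS(f,X)\bullet\beta_{K,X}$ are exactly the objects appearing in conditions (1) and (2), and one must check that the compatibility square of the $K$-path at $Y=X^L$ is literally the hypothesis of Lemma~\ref{lem:esht.1.31}.
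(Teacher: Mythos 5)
Your proposal is correct and matches the paper's own treatment: the paper simply declares Lemma~\ref{lem:esht.1.32} an immediate consequence of Lemma~\ref{lem:esht.1.31}, and your specialization ($Y=X^L$, $Z=X^K$, $\beta=\beta_{K,X}$, $\psi=\psi_{K,X^L,X}$) together with the bijectivity of $\widetilde{(\;\;)}$ is exactly the intended argument.
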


\begin{definition}
  Let $\S$ be a simplicial category.
  Let $K,L\in\sSet$. Let $X\in\S$.
  We assume that a $K$-path of $X$ and a $L$-path of $X$ exist.
  Since $\psi_{K,X^L,X}$ is an isomorphism,
  given any $u\in\sSet(K,L)$, there exists a unique morphism  $f\in\S(X^L,X^K)$ satisfying the equivalent condition of
  Lemma \ref{lem:esht.1.32}.
  We denote this morphism by
  \[
    X^u:X^L\rightarrow X^K.
  \]
  In other words, we have the following commutative diagram.
  \begin{equation*}
    \xymatrix{
      K \ar[r]^(0.3){\beta_{K,X}} \ar[d]_u
        & \uS(X^K,X) \ar[d]^{\uS(X^u,X)}\\
      L \ar[r]_(0.3){\beta_{L,X}} & \uS(X^L,X)}
  \end{equation*}
\end{definition}

The following lemma is also an immediate consequence of
Lemma \ref{lem:esht.1.31}.

\begin{lemma}
\label{lem:esht.1.34}
  Let $\S$ be a simplicial category.
  Let $K\in\sSet$. Let $X,Y\in\S$.
  We assume that a $K$-path of $X$ and a $K$-path of $Y$ exist.
  If $u\in\S(Y,X)$ and $f\in\S(Y^K,X^K)$,
  then the following conditions are equivalent.
  \begin{enumerate}
    \item $(\psi_{K,Y^K,X})_0(\widetilde{f})
        =\widetilde{\uS(Y^K,u)\bullet \beta_{K,Y}}$.
    \item $\uS(f,X)\bullet\beta_{K,X}=
        \uS(Y^K,u)\bullet \beta_{K,Y}$. I.e.,
        \begin{equation*}
          \xymatrix{
            K \ar[rr]^(0.4){\beta_{K,X}} \ar[d]_{\beta_{K,Y}}
              && \uS(X^K,X) \ar[d]^{\uS(f,X)}\\
            \uS(Y^K,Y) \ar[rr]_{\uS(Y^K,u)}
              && \uS(Y^K,X)}
        \end{equation*}
        is a commutative diagram.
  \end{enumerate}
\end{lemma}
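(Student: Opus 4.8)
The plan is to obtain this lemma from Lemma~\ref{lem:esht.1.31} in exactly the way Lemma~\ref{lem:esht.1.27} was obtained from Lemma~\ref{lem:esht.1.24}. So the whole argument is an application of Lemma~\ref{lem:esht.1.31} followed by transporting the resulting identity through conditions (1) and (2); no new computation should be needed.

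First I would apply Lemma~\ref{lem:esht.1.31} with its object ``$Y$'' taken to be $Y^K$, its object ``$Z$'' taken to be $X^K$, with $\beta:=\beta_{K,X}:K\to\uS(X^K,X)$ and $\psi:=\psi_{K,Y^K,X}:\uS(Y^K,X^K)\to\usSet(K,\uS(Y^K,X))$ coming from the fixed $K$-path of $X$. The hypothesis of Lemma~\ref{lem:esht.1.31} that must be checked is the identity
\[
  \sharp_{\uS(Y^K,X^K),K,\uS(Y^K,X)}(\psi_{K,Y^K,X})
    =\ub_{Y^K,X^K,X}\bullet(\pr_2,\beta_{K,X}\cdot\pr_1).
\]
By Definition~\ref{def:esht.1.16} the left-hand side unwinds to $\ev_{K,\uS(Y^K,X)}\bullet(\id_K\times\psi_{K,Y^K,X})$, and with that rewriting the displayed identity is precisely the commuting square of Definition~\ref{def:esht.1.2}(2)(c) for the $K$-path of $X$, with its universally quantified object taken to be $Y^K$; hence the hypothesis holds. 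Lemma~\ref{lem:esht.1.31} then yields, for every $f\in\S(Y^K,X^K)$,
\[
  (\psi_{K,Y^K,X})_0(\widetilde{f})=\widetilde{\uS(f,X)\bullet\beta_{K,X}}.
\]

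Next I would plug this into the two conditions of the statement. Substituting the displayed identity into condition~(1) turns (1) into the equation $\widetilde{\uS(f,X)\bullet\beta_{K,X}}=\widetilde{\uS(Y^K,u)\bullet\beta_{K,Y}}$ in $\usSet(K,\uS(Y^K,X))_0$. Since the map $\widetilde{(\;\;)}:\sSet(K,\uS(Y^K,X))\to\usSet(K,\uS(Y^K,X))_0$ of Definition~\ref{def:esht.1.11} is a bijection, this equation holds if and only if $\uS(f,X)\bullet\beta_{K,X}=\uS(Y^K,u)\bullet\beta_{K,Y}$ holds in $\sSet(K,\uS(Y^K,X))$, which is condition~(2). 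Every step above is an equivalence, so (1) and (2) are equivalent.

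I do not expect a real obstacle: the lemma is flagged in the text as an immediate consequence of Lemma~\ref{lem:esht.1.31}, and the argument is essentially bookkeeping. The one step that needs care is the verification of the hypothesis of Lemma~\ref{lem:esht.1.31}, i.e.\ confirming that, after the substitutions $Y\mapsto Y^K$, $Z\mapsto X^K$, the defining diagram of the $K$-path of $X$ coincides on the nose with the required $\sharp$-equation; this amounts to unwinding $\sharp_{\uS(Y^K,X^K),K,\uS(Y^K,X)}$ through Definition~\ref{def:esht.1.16} and matching the object slots appearing in $\ub$ and $\ev$.
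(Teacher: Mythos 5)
Your proof is correct and follows the paper's own route: the paper derives this lemma as an immediate consequence of Lemma~\ref{lem:esht.1.31}, exactly via the specialization $Y\mapsto Y^K$, $Z\mapsto X^K$, $\beta:=\beta_{K,X}$, $\psi:=\psi_{K,Y^K,X}$ that you spell out. Your verification of the $\sharp$-hypothesis from Definition~\ref{def:esht.1.2}(2)(c) and the final appeal to the bijectivity of $\widetilde{(\;\;)}$ supply precisely the bookkeeping the paper leaves implicit.
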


\begin{definition}
  Let $\S$ be a simplicial category.
  Let $K\in\sSet$. Let $X,Y\in\S$.
  We assume that a $K$-path of $X$ and a $K$-path of $Y$ exist.
  Since $\psi_{K,Y^K,X}$ is an isomorphism,
  given any $u\in\S(Y,X)$,
  there exists a unique morphism $f\in\S(Y^K,X^K)$
  satisfying the equivalent condition of
  Lemma \ref{lem:esht.1.34}.
  We denote this morphism with
  \[
    u^K:Y^K\rightarrow X^K.
  \]
  In other words, we have the following commutative diagram.
  \begin{equation}
  \label{eqn:esht.1.3}
    \xymatrix{
      K \ar[rr]^(0.4){\beta_{K,X}} \ar[d]_{\beta_{K,Y}}
        && \uS(X^K,X) \ar[d]^{\uS(u^K,X)}\\
      \uS(Y^K,Y) \ar[rr]_{\uS(Y^K,u)}
        && \uS(Y^K,X)}
  \end{equation}
\end{definition}

\begin{lemma}
\label{lem:esht.1.36}
  Let $\S$ be a simplicial category.
  Let $K\in\sSet$. Let $X,Y\in\S$.
  We assume that a $K$-path of $X$ and a $K$-path of $Y$ exist.
  If $u\in\S(Y,X)$ and $k_n\in K_n$, then
  \[
    (\sigma^n_0)^*(\widetilde{u})\ub_n(\beta_{K,Y})_n(k_n)=
      (\beta_{K,X})_n(k_n)\ub_n(\sigma^n_0)^*(\widetilde{u^K})
  \]
  holds.
  I.e.,
  \[
    \xymatrix{
      Y^K \ar[d]|n_{(\sigma^n_0)^*(\widetilde{u^K})\;}
        \ar[rr]|n^{(\beta_{K,Y})_n(k_n)}
        && Y \ar[d]|n^{\;(\sigma^n_0)^*(\widetilde{u})}\\
      X^K \ar[rr]|n_{(\beta_{K,X})_n(k_n)} && X}
  \]
  is a commutative diagram.
\end{lemma}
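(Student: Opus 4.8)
The plan is to obtain the identity as an immediate consequence of the defining commutative diagram \eqref{eqn:esht.1.3} of the morphism $u^K$, in exact parallel with the way Lemma~\ref{lem:esht.1.30} was read off from \eqref{eqn:esht.1.2}. First I would fix $k_n\in K_n$ and evaluate the square \eqref{eqn:esht.1.3} in simplicial degree $n$ at the element $k_n$. Following $k_n$ along the two edge-paths of that square gives the equality
\[
  \uS(u^K,X)_n\bigl((\beta_{K,X})_n(k_n)\bigr)
    =\uS(Y^K,u)_n\bigl((\beta_{K,Y})_n(k_n)\bigr)
\]
in the set $\uS(Y^K,X)_n$, using that $(\beta_{K,X})_n(k_n)\in\uS(X^K,X)_n$ and $(\beta_{K,Y})_n(k_n)\in\uS(Y^K,Y)_n$.

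The second step is to expand both sides by means of axiom \textbf{S5}. Applying part (2) of \textbf{S5} to $u^K\in\S(Y^K,X^K)$ and $(\beta_{K,X})_n(k_n)$ turns the left-hand side into $(\beta_{K,X})_n(k_n)\ub_n(\sigma^n_0)^*(\widetilde{u^K})$, while applying part (1) of \textbf{S5} to $u\in\S(Y,X)$ and $(\beta_{K,Y})_n(k_n)$ turns the right-hand side into $(\sigma^n_0)^*(\widetilde{u})\ub_n(\beta_{K,Y})_n(k_n)$. Equating the two rewritten expressions is precisely the asserted formula, and the pictured square is simply its translation into the $\ub_n$-arrow notation.

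I do not anticipate any genuine difficulty; the one point demanding attention is the variance bookkeeping when invoking \textbf{S5}. The left vertical leg of \eqref{eqn:esht.1.3} is $\uS(u^K,X)$, which is contravariant in its first slot, so it is part (2) of \textbf{S5} that must be used there, whereas the bottom leg $\uS(Y^K,u)$ is covariant in its second slot and therefore calls for part (1). Once the domains and codomains are aligned accordingly the computation collapses to a single line, so the proof will be as short as that of Lemma~\ref{lem:esht.1.30}.
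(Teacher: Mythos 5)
Your proposal is correct and is exactly the paper's argument: the paper's proof simply states that the lemma follows from the commutative diagram \eqref{eqn:esht.1.3}, and your expansion---evaluating that square at $k_n$ and rewriting the two legs $\uS(u^K,X)_n$ and $\uS(Y^K,u)_n$ via parts (2) and (1) of \textbf{S5} respectively---is the intended computation, with the variance bookkeeping handled correctly.
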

\begin{proof}
  It follows from the commutative diagram \eqref{eqn:esht.1.3}.
\end{proof}

Now we give the proof of Theorem \ref{thm:esht.1.1}.

\begin{proof}[Proof of Theorem \ref{thm:esht.1.1}]

(1)
  Since $\phi_{K,X,X\otimes'K}$ is an isomorphism,
  by Lemma \ref{lem:esht.1.24},
  there exists a unique morphism $f\in\S(X\otimes K,X\otimes'K)$
  making
  \begin{equation*}
    \xymatrix{
      K \ar[r]^(0.3){\alpha_{K,X}} \ar[d]_{\id_K}
        & \uS(X,X\otimes K) \ar[d]^{\uS(X,f)}\\
      K \ar[r]_(0.3){\alpha_{K,X}'} & \uS(X,X\otimes'K)}
  \end{equation*}
  commute.

(2)
  Since $\psi_{K,X,X^{K'}}$ is an isomorphism,
  by Lemma \ref{lem:esht.1.31},
  there exists a unique morphism $f\in\S(X^{K'},X^{K})$
  making
  \begin{equation*}
    \xymatrix{
      K \ar[r]^(0.3){\beta_{K,X}} \ar[d]_{\id_K}
        & \uS(X^K,X) \ar[d]^{\uS(f,X)}\\
      K \ar[r]_(0.3){\beta_{K,X}'} & \uS(X^{K'},X)}
  \end{equation*}
  commute.
\end{proof}

Next, we give the proof of Theorem \ref{thm:esht.1.2}.

\begin{proof}[Proof of Theorem \ref{thm:esht.1.2}]
(1) Let $X,Y,Z\in\S$. Let $K,L,M\in\sSet$.
  For every $u:K\rightarrow L$ and $f:X\rightarrow Y$,
  we have already defined $X\otimes u$ and $f\otimes K$.

(a) $X\otimes (-)$ is a functor:
  Let $u\in\S(K,L)$ and $v\in\S(L,M)$.
  Then by the uniqueness of $X\otimes (v\bullet u)$,
  \[
    (X\otimes v)\bullet (X\otimes u)=X\otimes (v\bullet u)
  \]
  holds. Similarly, by the uniqueness of $X\otimes \id_K$,
  \[
    \id_{X\otimes K}=X\otimes \id_K
  \]
  holds.

(b) $(-)\otimes K$ is a functor:

(i)
  Let $u\in\S(X,Y)$ and $v\in\S(Y,Z)$.
  Since $\uS(-,-)$ is a functor,
  the parallelogram at the bottom of the following diagram
  is a commutative diagram.
  \begin{equation*}
    \xymatrix{
      K \ar[d]_= \ar[r]^(0.3){\alpha_{K,X}}
        & \uS(X,X\otimes K) \ar[drr]^{\;\;\uS(X,u\otimes K)}
        & & \\
      K \ar[d]_= \ar[r]^(0.3){\alpha_{K,Y}}
        & \uS(Y,Y\otimes K) \ar[rr]^{\uS(u,Y\otimes K)}
          \ar[drr]^{\;\;\uS(Y,v\otimes K)}
        && \uS(X,Y\otimes K) \ar[drr]^{\;\;\uS(X,v\otimes K)} && \\
      K \ar[r]_(0.3){\alpha_{K,Z}}
        & \uS(Z,Z\otimes K) \ar[rr]_{\uS(v,Z\otimes K)}
        && \uS(Y,Z\otimes K) \ar[rr]_{\uS(u,Z\otimes K)}
        && \uS(X,Z\otimes K)}
  \end{equation*}
  By Lemma \ref{lem:esht.1.27}, the rest also commute.
  Therefore,
  \begin{equation*}
    \xymatrix{
      K \ar[rr]^(0.4){\alpha_{K,X}} \ar[d]_{\alpha_{K,Z}}
        && \uS(X,X\otimes K) \ar[d]^{\uS(X,(v\otimes K)\bullet(u\otimes K))}\\
      \uS(Z,Z\otimes K) \ar[rr]_{\uS(v\bullet u,Z\otimes K)}
        && \uS(X, Z\otimes K)}
  \end{equation*}
  is a commutative diagram.
  Then, by the uniqueness of $(v\bullet u)\otimes K$,
  \[
    (v\otimes K)\bullet(u\otimes K)=(v\bullet u)\otimes K
  \]
  holds.

(ii) Since $\uS(-,-)$ is a functor,
  \begin{equation*}
    \xymatrix{
      K \ar[rr]^(0.4){\alpha_{K,X}} \ar[d]_{\alpha_{K,X}}
        && \uS(X,X\otimes K) \ar[d]^{\uS(X,\id_{X\otimes K})}\\
      \uS(X,X\otimes K) \ar[rr]_{\uS(\id_X,X\otimes K)}
        && \uS(X, X\otimes K)}
  \end{equation*}
  is also commutative diagram.
  Then, again by the uniqueness of $\id_X\otimes K$,
  \[
    \id_{X\otimes K}=\id_X\otimes K
  \]
  holds.

(c) $(-)\otimes (-)$ is a bifunctor:
  Let $u\in\sSet(K,L)$ and $f\in\S(X,Y)$.
  $\uS(-,-)$는 functor이다 .
  따라서 다음 diagram의 앞면은 commutative diagram이다.
  Consider the following diagram.
  \[
    \xymatrix{
      K \ar[rr]^{\alpha_{K,X}} \ar[dr]^{\alpha_{K,Y}} \ar[dd]_u
        & & \uS(X,X\otimes K) \ar[dr]^{\;\;\uS(X,f\otimes K)}
        \ar[dd]|\hole^(0.7){\uS(X,X\otimes u)}& \\
      & \uS(Y,Y\otimes K) \ar[rr]^(0.35){\uS(f,Y\otimes K)}
        \ar[dd]^(0.7){\uS(Y,Y\otimes u)}
        & & \uS(X,Y\otimes K) \ar[dd]^{\uS(X,Y\otimes u)}\\
      L \ar[rr]^(0.2){\alpha_{L,X}}|(0.42)\hole \ar[dr]_{\alpha_{L,Y}}
        & & \uS(X,X\otimes L) \ar[dr]^{\;\;\uS(X,f\otimes L)} & \\
      & \uS(Y,Y\otimes L) \ar[rr]_{\uS(f,Y\otimes L)}& & \uS(X,Y\otimes L)}
  \]
  Every face commutes except possibly the right one.
  So,
  \[
    \uS(X,(Y\otimes u)\bullet(f\otimes K))\bullet\alpha_{K,X}
    =
    \uS(X,(f\otimes L)\bullet(X\otimes u))\bullet\alpha_{K,X}.
  \]
  Then, by Lemma \ref{lem:esht.1.24}
  \[
    (\phi_{K,X,Y\otimes L})_0(\widetilde{(Y\otimes u)\bullet(f\otimes K)})
    =
    (\phi_{K,X,Y\otimes L})_0(\widetilde{(f\otimes L)\bullet(X\otimes u)}).
  \]
  Then, since $\phi_{K,X,Y\otimes L}$ is an isomorphism,
  \[
    (Y\otimes u)\bullet(f\otimes K)=(f\otimes L)\bullet(X\otimes u).
  \]

(2)

(a) $X^{(-)}$ is a functor:
  Let $u\in\S(K,L)$ and $v\in\S(L,M)$.
  By the uniqueness of $X^{(v\bullet u)}$
  \[
    (X^u)\bullet (X^v)=X^{(v\bullet u)}
  \]
  holds. Similarly, by the uniqueness of $X^{\id_K}$,
  \[
    \id_{X^K}=X^{\id_K}
  \]
  holds.

(b) $(-)^K$ is a functor다:

(i)
  Let $u\in\S(Y,X)$ and $v\in\S(Z,Y)$.
  Since $\uS(-,-)$ is a functor,
  the parallelogram at the bottom of the following diagram
  is a commutative diagram.
  \begin{equation*}
    \xymatrix{
      K \ar[d]_= \ar[r]^(0.3){\beta_{K,X}}
        & \uS(X^K,X) \ar[drr]^{\;\;\uS(u^K,X)}
        & & \\
      K \ar[d]_= \ar[r]^(0.3){\beta_{K,Y}}
        & \uS(Y^K,Y) \ar[rr]^{\uS(Y^K,u)}
          \ar[drr]^{\;\;\uS(v^K,Y)}
        && \uS(Y^K,X) \ar[drr]^{\;\;\uS(v^K,X)} && \\
      K \ar[r]_(0.3){\beta_{K,Z}}
        & \uS(Z^K,Z) \ar[rr]_{\uS(Z^K,v)}
        && \uS(Z^K,Y) \ar[rr]_{\uS(Z^K,u)} && \uS(Z^K,X)}
  \end{equation*}
  By Lemma \ref{lem:esht.1.34}, the rest also commute.
  Therefore
  \begin{equation*}
    \xymatrix{
      K \ar[rr]^(0.4){\beta_{K,X}} \ar[d]_{\beta_{K,Z}}
        && \uS(X^K,X) \ar[d]^{\uS((u^K)\bullet(v^K),X)}\\
      \uS(Z^K,Z) \ar[rr]_{\uS(Z^K,u\bullet v)}
        && \uS(Z^K,X)}
  \end{equation*}
  is a commutative diagram.
  Then, by the uniqueness of $(u\bullet v)^K$,
  \[
    (u^K)\bullet(v^K)=(u\bullet v)^K
  \]
  holds.

(ii) Since $\uS(-,-)$ is a  functor,
  \begin{equation*}
    \xymatrix{
      K \ar[rr]^(0.4){\beta_{K,X}} \ar[d]_{\beta_{K,X}}
        && \uS(X^K,X) \ar[d]^{\uS(\id_{X^K},X)}\\
      \uS(X^K,X) \ar[rr]_{\uS(X^K,\id_X)}
        && \uS(X^K,X)}
  \end{equation*}
  is a commutative diagram.
  Thus, by the uniqueness of $(\id_X)^K$,
  \[
    \id_{X^K}=(\id_X)^K
  \]
  holds.

(c) $(-)^{(-)}$ is a bifunctor다:
  Let $u\in\sSet(K,L)$ and $f\in\S(Y,X)$.
  $\uS(-,-)$는 functor이다.
  Consider the following commutative diagram.
  \[
    \xymatrix{
      K \ar[rr]^{\beta_{K,X}} \ar[dr]^{\beta_{K,Y}} \ar[dd]_u
        & & \uS(X^ K,X) \ar[dr]^{\;\;\uS(f^ K,X)}
        \ar[dd]|\hole^(0.7){\uS(X^u,X)}& \\
      & \uS(Y^ K,Y) \ar[rr]^(0.35){\uS(Y^ K,f)}
        \ar[dd]^(0.7){\uS(Y^ u,Y)}
        & & \uS(Y^ K,X) \ar[dd]^{\uS(Y^ u,X)}\\
      L \ar[rr]^(0.2){\beta_{L,X}}|(0.42)\hole \ar[dr]_{\beta_{L,Y}}
        & & \uS(X^ L,X) \ar[dr]^{\;\;\uS(f^ L,X)} & \\
      & \uS(Y^ L,Y) \ar[rr]_{\uS(Y^ L,f)}& & \uS(Y^ L,X)}
  \]
  Every face commutes except possibly the right one.
  So,
  \[
    \uS((f^ K)\bullet(Y^ u),X)\bullet\beta_{K,X}
    =
    \uS((X^ u)\bullet(f^ L),X)\bullet\beta_{K,X}.
  \]
  Then by  Lemma \ref{lem:esht.1.31}
  \[
    (\psi_{K,Y^L,X})_0(\widetilde{(f^ K)\bullet(Y^ u)})
    =
    (\psi_{K,Y^L,X})_0(\widetilde{(X^ u)\bullet(f^ L)}).
  \]
  Then, since $\psi_{K,Y^L,X}$ is an isomorphism,
  \[
    (f^ K)\bullet(Y^ u)=(X^ u)\bullet(f^ L).
  \]
\end{proof}

Finally, we prove Theorem \ref{thm:esht.1.3}.

\begin{proof}[Proof of Theorem \ref{thm:esht.1.3}]
(1) Let $K,L\in\sSet$. Let $W,X,Y,Z\in\S$.

(a)
  $\phi_{-,X,Y}$: Let $u\in\sSet(K,L)$.
  By Lemma \ref{lem:esht.1.18},
  we need to show that
  \begin{equation}
  \label{eqn:esht.1.4}
    \sharp(\phi_{K,X,Y}\bullet\uS(X\otimes u,Y))
    =\sharp(\usSet(u,\uS(X,Y))\bullet\phi_{L,X,Y})
  \end{equation}
  holds where $\sharp=\sharp_{\uS(X\otimes L,Y),K,\uS(X,Y)}$.
  \[
    \xymatrix{
      \uS(X\otimes K,Y) \ar[rr]^{\phi_{K,X,Y}}
        && \usSet(K,\uS(X,Y))\\
      \uS(X\otimes L,Y) \ar[rr]_{\phi_{L,X,Y}} \ar[u]^{\uS(X\otimes u,Y)} &
        & \usSet(L,\uS(X,Y)) \ar[u]_{\usSet(u,\uS(X,Y))}}
  \]

(i)
  By the definition of $\sharp$ and $\phi_{K,X,Y}$,
  the dotted arrows of the following diagram computes
  $\sharp(\phi_{K,X,Y}\bullet\uS(X\otimes u,Y))$.
  \[
    \xymatrix{
      K\times \uS(X\otimes L, Y)
        \ar@{.>}[d]_{\id_K\times\uS(X\otimes u,Y)} &&\\
      K\times \uS(X\otimes K, Y)
        \ar@{.>}[rr]^(0.43){\alpha_{K,X}\times\id_{\uS(X\otimes K, Y)}}
        \ar[d]_{\id_K\times \phi_{K,X,Y}}
        && \uS(X,X\otimes K)\times\uS(X\otimes K, Y) \ar@{.>}[d]^{\ub} \\
      K\times \usSet(K,\uS(X,Y)) \ar[rr]_{\ev_{K,\uS(X,Y)}} && \uS(X,Y)}
  \]

(ii) By Lemma \ref{lem:esht.1.16} and the definition of $\phi_{L,X,Y}$,
  the following diagram commutes.
  \[
    \xymatrix{
      K\times \uS(X\otimes L, Y) \ar@{.>}[r]^{u\times\id}
        \ar[d]^{\id_K\times \phi_{L,X,Y}}
        & L\times \uS(X\otimes L, Y)
        \ar@{.>}[rr]^(0.4){\alpha_{L,X}\times\id_{\uS(X\otimes L,Y)}}
        \ar[d]_{\id_L\times \phi_{L,X,Y}}
        && \uS(X,X\otimes L)\times\uS(X\otimes L, Y) \ar@{.>}[dd]^{\ub}\\
      K\times \usSet(L, \uS(X,Y)) \ar[r]^{u\times\id}
        \ar[d]^{\id_K\times \usSet(u,\uS(X,Y))}
        & L\times \usSet(L, \uS(X,Y))
        \ar[drr]^(0.43){\ev_{L,\uS(X,Y)}}
        &&   \\
      K\times \usSet(K,\uS(X,Y)) \ar[rrr]_{\ev_{K,\uS(X,Y)}} &&& \uS(X,Y)}
  \]
  Thus, the dotted arrows computes
  $\sharp(\usSet(u,\uS(X,Y))\bullet\phi_{L,X,Y})$.

(iii)
  Let $k_n\in K_n$ and $f_n\in\uS(X\otimes L,Y)_n$. Then,
  by \textbf{S5} and Lemma \ref{lem:esht.1.25},
  \begin{alignat*}{1}
    \uS(X\otimes u,Y)_n(f_n)\ub_n(\alpha_{K,X})_n(k_n)
    &= f_n\ub_n(\sigma^n_0)^*(\widetilde{X\otimes u})
      \ub_n(\alpha_{K,X})_n(k_n)\\
    &= f_n\ub_n(\alpha_{L,X})_n(u_n(k_n)).
  \end{alignat*}
  Thus
  \[
    \xymatrix{
      K\times\uS(X\otimes K,Y) \ar[rr]^(0.4){\alpha_{K,X}\times\id}
        && \uS(X,X\otimes K)\times\uS(X\otimes K,Y) \ar[d]^{\ub} \\
      K\times \uS(X\otimes L, Y)
        \ar[u]^{\id_K\times\uS(X\otimes u,Y)}
        \ar[d]_{u\times\id} && \uS(X,Y)\\
      L\times\uS(X\otimes L,Y) \ar[rr]_(0.4){\alpha_{L,X}\times\id}
        && \uS(X,X\otimes L)\times\uS(X\otimes L,Y) \ar[u]_{\ub}}
  \]
  is a commutative diagram.

  Therefore, by (i), (ii) and (iii), \eqref{eqn:esht.1.4} holds.

(b) $\phi_{K,-,Y}$:
  Let $u\in\S(W,X)$.
  By Lemma \ref{lem:esht.1.18},
  we need to show that
  \begin{equation}
  \label{eqn:esht.1.5}
    \sharp(\phi_{K,W,Y}\bullet\uS(u\otimes K,Y))
    =\sharp(\usSet(K,\uS(u,Y))\bullet\phi_{K,X,Y})
  \end{equation}
  holds where $\sharp=\sharp_{\uS(X\otimes K,Y),K,\uS(W,Y)}$.
  \[
    \xymatrix{
      \uS(W\otimes K,Y) \ar[rr]^{\phi_{K,W,Y}}
        && \usSet(K,\uS(W,Y))\\
      \uS(X\otimes K,Y) \ar[rr]_{\phi_{K,X,Y}} \ar[u]^{\uS(u\otimes K,Y)} &
        & \usSet(K,\uS(X,Y)) \ar[u]_{\usSet(K,\uS(u,Y))}}
  \]

(i)
  By the definition of $\sharp$ and $\phi_{K,W,Y}$,
  the dotted arrows in the following diagram computes
  $\sharp(\phi_{K,W,Y}\bullet\uS(u\otimes K,Y))$.
  \[
    \xymatrix{
      K\times \uS(X\otimes K, Y)
        \ar@{.>}[d]_{\id_K\times\uS(u\otimes K,Y)} &&\\
      K\times \uS(W\otimes K, Y)
        \ar@{.>}[rr]^(0.43){\alpha_{K,W}\times\id_{\uS(W\otimes K, Y)}}
        \ar[d]_{\id_K\times \phi_{K,W,Y}}
        && \uS(W,W\otimes K)\times\uS(W\otimes K, Y) \ar@{.>}[d]^{\ub} \\
      K\times \usSet(K,\uS(W,Y)) \ar[rr]_{\ev_{K,\uS(W,Y)}} && \uS(W,Y)}
  \]

(ii)
  By the definition of $\phi_{K,X,Y}$ and Lemma \ref{lem:esht.1.15},
  the following diagram commutes.
  \[
    \xymatrix{
      K\times \uS(X\otimes K, Y)
        \ar@{.>}[rr]^(0.43){\alpha_{K,X}\times\id_{\uS(X\otimes K, Y)}}
        \ar[d]_{\id_K\times\phi_{K,X,Y}}  &
        & \uS(X,X\otimes K)\times\uS(X\otimes K, Y) \ar@{.>}[d]^{\ub}\\
      K\times \usSet(K,\uS(X,Y))
        \ar[rr]^{\ev_{K,\uS(X,Y)}}
        \ar[d]_{\id_K\times \usSet(K,\uS(u,Y))}
        && \uS(X,Y) \ar@{.>}[d]^{\uS(u,Y)} \\
      K\times \usSet(K,\uS(W,Y)) \ar[rr]_{\ev_{K,\uS(W,Y)}} && \uS(W,Y)}
  \]
  Then, the dotted arrows computes
  $\sharp(\usSet(K,\uS(u,Y))\bullet\phi_{K,X,Y})$.

(iii)
  Let $k_n\in K_n$ and $f_n\in\uS(X\otimes K,Y)_n$.
  By \textbf{S5} and Lemma \ref{lem:esht.1.30},
  \begin{alignat*}{1}
    \uS(u,Y)_n(f_n\ub_n(\alpha_{K,X})_n(k_n))
    &= f_n\ub_n(\alpha_{K,X})_n(k_n)\ub_n(\sigma^n_0)^*(\widetilde{u})\\
    &= (f_n\ub_n
      (\sigma^n_0)^*(\widetilde{u\otimes K}))\ub_n(\alpha_{K,W})_n(k_n)\\
    &= \uS(u\otimes K,Y)_n(f_n)\ub_n(\alpha_{K,W})_n(k_n).
  \end{alignat*}
  Thus
  \[
    \xymatrix{
      \uS(X,X\otimes K)\times\uS(X\otimes K,Y)
        \ar[rr]^(0.65){\ub}
        && \uS(X,Y) \ar[d]^{\uS(u,Y)}  \\
      K\times \uS(X\otimes K, Y)
        \ar[u]^{\alpha_{K,X}\times\id_{\uS(X\otimes K,Y)}}
        \ar[d]_{\id_K\times\uS(u\otimes K,Y)} && \uS(W,Y)\\
      K\times\uS(W\otimes K,Y)
        \ar[rr]_(0.43){\alpha_{K,W}\times\id_{\uS(W\otimes K,Y)}}
        && \uS(W,W\otimes K)\times\uS(W\otimes K,Y) \ar[u]_{\ub}}
  \]
  is a  commutative diagram.

  Therefore, by (i), (ii) and (iii), \eqref{eqn:esht.1.5} holds.

(c) $\phi_{K,X,-}$:
  Let $u\in\S(Y,Z)$.
  By Lemma \ref{lem:esht.1.18},
  we need to show that
  \begin{equation}
  \label{eqn:esht.1.6}
    \sharp(\phi_{K,X,Z}\bullet\uS(X\otimes K,u))
    =\sharp(\usSet(K,\uS(X,u))\bullet\phi_{K,X,Y})
  \end{equation}
  holds where $\sharp=\sharp_{\uS(X\otimes K,Y),K,\uS(X,Z)}$.
  \[
    \xymatrix{
      \uS(X\otimes K,Y) \ar[rr]^{\phi_{K,X,Y}} \ar[d]_{\uS(X\otimes K,u)}
        && \usSet(K,\uS(X,Y)) \ar[d]^{\usSet(K,\uS(X,u))}\\
      \uS(X\otimes K,Z) \ar[rr]_{\phi_{K,X,Z}}&
        & \usSet(K,\uS(X,Z))}
  \]

(i)
  By definition of $\sharp$ and $\phi_{K,W,Y}$,
  the dotted arrows in the following diagram computes
  $\sharp(\phi_{K,X,Z}\bullet\uS(X\otimes K,u))$.
  \[
    \xymatrix{
      K\times \uS(X\otimes K, Y)
        \ar@{.>}[d]_{\id_K\times\uS(X\otimes K,u)} &&\\
      K\times \uS(X\otimes K, Z)
        \ar@{.>}[rr]^(0.43){\alpha_{K,X}\times\id_{\uS(X\otimes K, Z)}}
        \ar[d]_{\id_K\times \phi_{K,X,Z}}
        && \uS(X,X\otimes K)\times\uS(X\otimes K, Z) \ar@{.>}[d]^{\ub} \\
      K\times \usSet(K,\uS(X,Z)) \ar[rr]_{\ev_{K,\uS(X,Z)}} && \uS(X,Z)}
  \]

(ii)
  By definition of $\phi_{K,X,Y}$ and Lemma \ref{lem:esht.1.15},
  the following is a commutative diagram.
  \[
    \xymatrix{
      K\times \uS(X\otimes K, Y)
        \ar@{.>}[rr]^(0.43){\alpha_{K,X}\times\id_{\uS(X\otimes K, Y)}}
        \ar[d]_{\id_K\times\phi_{K,X,Y}}  &
        & \uS(X,X\otimes K)\times\uS(X\otimes K, Y) \ar@{.>}[d]^{\ub}\\
      K\times \usSet(K,\uS(X,Y))
        \ar[rr]^{\ev_{K,\uS(X,Y)}}
        \ar[d]_{\id_K\times \usSet(K,\uS(X,u))}
        && \uS(X,Y) \ar@{.>}[d]^{\uS(X,u)} \\
      K\times \usSet(K,\uS(X,Z)) \ar[rr]_{\ev_{K,\uS(X,Z)}} && \uS(X,Z)}
  \]
  Then, the dotted arrows computes
  $\sharp(\usSet(K,\uS(X,u))\bullet\phi_{K,X,Y})$.

(iii)
  Let $k_n\in K_n$ and $f_n\in\uS(X\otimes K,Y)_n$.
  By \textbf{S5},
  \begin{alignat*}{1}
    \uS(X,u)_n((f_n)\ub_n(\alpha_{K,X})_n(k_n))
    &= (\sigma^n_0)^*(\widetilde{u})\ub_n(f_n\ub_n(\alpha_{K,X})_n(k_n))\\
    &= ((\sigma^n_0)^*(\widetilde{u})\ub_nf_n)\ub_n(\alpha_{K,X})_n(k_n)\\
    &= \uS(X\otimes K,u)_n(f_n)\ub_n(\alpha_{K,X})_n(k_n).
  \end{alignat*}
  Then,
  \[
    \xymatrix{
      \uS(X,X\otimes K)\times\uS(X\otimes K,Y)
        \ar[rr]^(0.65){\ub}
        && \uS(X,Y) \ar[d]^{\uS(X,u)}  \\
      K\times \uS(X\otimes K, Y)
        \ar[u]^{\alpha_{K,X}\times\id_{\uS(X\otimes K,Y)}}
        \ar[d]_{\id_K\times\uS(X\otimes K,u)} && \uS(X,Z)\\
      K\times\uS(X\otimes K,Z)
        \ar[rr]_(0.43){\alpha_{K,X}\times\id_{\uS(X\otimes K,Z)}}
        && \uS(X,X\otimes K)\times\uS(X\otimes K,Z) \ar[u]_{\ub}}
  \]
  is a  commutative diagram.

  Therefore, by (i), (ii) and (iii), \eqref{eqn:esht.1.6} holds.

(2)Let $K,L\in\sSet$. Let $W,X,Y,Z\in\S$.

(a)
  $\psi_{-,Y,X}$: Let $u\in\sSet(K,L)$.
  By Lemma \ref{lem:esht.1.18},
  we need to show that
  \begin{equation}
  \label{eqn:esht.1.7}
    \sharp(\psi_{K,Y,X}\bullet\uS(Y,X^u))
    =\sharp(\usSet(u,\uS(Y,X))\bullet\psi_{L,Y,X})
  \end{equation}
  holds where $\sharp=\sharp_{\uS(Y,X^L),K,\uS(Y,X)}$.
  \[
    \xymatrix{
      \uS(Y,X^K) \ar[rr]^{\psi_{K,Y,X}}
        && \usSet(K,\uS(Y,X))\\
      \uS(Y,X^L) \ar[rr]_{\psi_{L,Y,X}} \ar[u]^{\uS(Y,X^u)} &
        & \usSet(L,\uS(Y,X)) \ar[u]_{\usSet(u,\uS(Y,X))}}
  \]

(i)
  By the definition of $\sharp$ and $\psi_{K,Y,X}$,
  the dotted arrows of the following diagram computes
  $\sharp(\psi_{K,Y,X}\bullet\uS(Y,X^u))$.
  \[
    \xymatrix{
      K\times \uS(Y,X^L)
        \ar@{.>}[d]_{\id_K\times\uS(Y,X^u)} &&\\
      K\times \uS(Y,X^K)
        \ar@{.>}[rr]^(0.43){(\pr_2,\beta_{K,X}\cdot\pr_1)}
        \ar[d]_{\id_K\times \psi_{K,Y,X}}
        && \uS(Y,X^K)\times\uS(X^K, X) \ar@{.>}[d]^{\ub} \\
      K\times \usSet(K,\uS(Y,X)) \ar[rr]_{\ev_{K,\uS(Y,X)}} && \uS(Y,X)}
  \]

(ii) By Lemma \ref{lem:esht.1.16} and the definition of $\psi_{L,Y,X}$,
  the following diagram commutes.
  \[
    \xymatrix{
      K\times \uS(Y,X^L) \ar@{.>}[r]^{u\times\id}
        \ar[d]^{\id_K\times \psi_{L,Y,X}}
        & L\times \uS(Y,X^L)
        \ar@{.>}[rr]^(0.4){(\pr_2,\beta_{L,X}\cdot\pr_1)}
        \ar[d]_{\id_L\times \psi_{L,Y,X}}
        && \uS(Y,X^L)\times\uS(X^L, X) \ar@{.>}[dd]^{\ub}\\
      K\times \usSet(L, \uS(Y,X)) \ar[r]^{u\times\id}
        \ar[d]^{\id_K\times \usSet(u,\uS(Y,X))}
        & L\times \usSet(L, \uS(Y,X))
        \ar[drr]^(0.43){\ev_{L,\uS(Y,X)}}
        &&   \\
      K\times \usSet(K,\uS(Y,X)) \ar[rrr]_{\ev_{K,\uS(Y,X)}} &&& \uS(Y,X)}
  \]
  Thus, the dotted arrows computes
  $\sharp(\usSet(u,\uS(Y,X))\bullet\psi_{L,Y,X})$.

(iii)
  Let $k_n\in K_n$ and $f_n\in\uS(Y,X^L)_n$. Then,
  by \textbf{S5} and Lemma \ref{lem:esht.1.32},
  \begin{alignat*}{1}
    (\beta_{K,X})_n(k_n)\ub_n \uS(Y,X^u)_n(f_n)
    &= (\beta_{K,X})_n(k_n)\ub_n(\sigma^n_0)^*(\widetilde{X^u})\ub_n
      f_n\\
    &= (\beta_{L,X})_n(u_n(k_n))\ub_nf_n.
  \end{alignat*}
  Thus
  \[
    \xymatrix{
      K\times\uS(Y,X^K) \ar[rr]^(0.42){(\pr_2,\beta_{K,X}\cdot\pr_1)}
        && \uS(Y,X^K)\times\uS(X^K,X) \ar[d]^{\ub} \\
      K\times \uS(Y,X^L)
        \ar[u]^{\id_K\times\uS(Y,X^u)}
        \ar[d]_{u\times\id} && \uS(X,Y)\\
      L\times\uS(Y,X^L) \ar[rr]_(0.42){(\pr_2,\beta_{L,X}\cdot\pr_1)}
        && \uS(Y,X^L)\times\uS(X^L,X) \ar[u]_{\ub}}
  \]
  is a commutative diagram.

  Therefore, by (i), (ii) and (iii), \eqref{eqn:esht.1.7} holds.

(b) $\psi_{K,-,X}$:
  Let $u\in\S(Y,Z)$.
  By Lemma \ref{lem:esht.1.18},
  we need to show that
  \begin{equation}
  \label{eqn:esht.1.8}
    \sharp(\psi_{K,Y,X}\bullet\uS(u,X^K))
    =\sharp(\usSet(K,\uS(u,X))\bullet\psi_{K,Z,X})
  \end{equation}
  holds where $\sharp=\sharp_{\uS(Z,X^K),K,\uS(Y,X)}$.
  \[
    \xymatrix{
      \uS(Y,X^K) \ar[rr]^(0.4){\psi_{K,Y,X}}
        && \usSet(K,\uS(Y,X))\\
      \uS(Z,X^K) \ar[rr]_(0.4){\psi_{K,Z,X}} \ar[u]^{\uS(u,X^K)} &
        & \usSet(K,\uS(Z,X)) \ar[u]_{\usSet(K,\uS(u,X))}}
  \]

(i)
  By the definition of $\sharp$ and $\psi_{K,Y,X}$,
  the dotted arrows in the following diagram computes
  $\sharp(\psi_{K,Y,X}\bullet\uS(u,X^K))$.
  \[
    \xymatrix{
      K\times \uS(Z,X^K)
        \ar@{.>}[d]_{\id_K\times\uS(u,X^K)} &&\\
      K\times \uS(Y,X^K)
        \ar@{.>}[rr]^(0.43){(\pr_2,\beta_{K,X}\cdot\pr_1)}
        \ar[d]_{\id_K\times \psi_{K,Y,X}}
        && \uS(Y,X^K)\times\uS(X^K, X) \ar@{.>}[d]^{\ub} \\
      K\times \usSet(K,\uS(Y,X)) \ar[rr]_{\ev_{K,\uS(Y,X)}} && \uS(Y,X)}
  \]

(ii)
  By the definition of $\psi_{K,Z,X}$ and Lemma \ref{lem:esht.1.15},
  the following diagram commutes.
  \[
    \xymatrix{
      K\times \uS(Z,X^K)
        \ar@{.>}[rr]^(0.42){(\pr_2,\beta_{K,X}\cdot\pr_1)}
        \ar[d]_{\id_K\times\psi_{K,Z,X}}  &
        & \uS(Z,X^K)\times\uS(X^K,X) \ar@{.>}[d]^{\ub}\\
      K\times \usSet(K,\uS(Z,X))
        \ar[rr]^{\ev_{K,\uS(Z,X)}}
        \ar[d]_{\id_K\times \usSet(K,\uS(u,X))}
        && \uS(Z,X) \ar@{.>}[d]^{\uS(u,X)} \\
      K\times \usSet(K,\uS(Y,X)) \ar[rr]_{\ev_{K,\uS(Y,X)}} && \uS(Y,X)}
  \]
  Then, the dotted arrows computes
  $\sharp(\usSet(K,\uS(u,X))\bullet\psi_{K,Z,X})$.

(iii)
  Let $k_n\in K_n$ and $f_n\in\uS(Z,X^K)_n$.
  By \textbf{S5},
  \begin{alignat*}{1}
    (\beta_{K,X})_n(k_n)\ub_n \uS(u,X^K)_n(f_n)
    &= (\beta_{K,X})_n(k_n)\ub_n (f_n \ub_n(\sigma^n_0)^*(\widetilde{u}))\\
    &= ((\beta_{K,X})_n(k_n)\ub_n f_n) \ub_n(\sigma^n_0)^*(\widetilde{u})\\
    &= \uS(u,X)_n((\beta_{K,X})_n(k_n)\ub_n f_n).
  \end{alignat*}
  Thus
  \[
    \xymatrix{
      \uS(Z,X^K)\times\uS(X^K,X)
        \ar[rr]^(0.65){\ub}
        && \uS(Z,X) \ar[d]^{\uS(u,X)}  \\
      K\times \uS(Z,X^K)
        \ar[u]^{(\pr_2,\beta_{K,X}\cdot\pr_1)}
        \ar[d]_{\id_K\times\uS(u,X^K)} && \uS(Y,X)\\
      K\times\uS(Y,X^K)
        \ar[rr]_(0.43){(\pr_2,\beta_{K,X}\cdot\pr_1)}
        && \uS(Y,X^K)\times\uS(X^K,X) \ar[u]_{\ub}}
  \]
  is a  commutative diagram.

  Therefore, by (i), (ii) and (iii), \eqref{eqn:esht.1.8} holds.

(c) $\psi_{K,Y,-}$:
  Let $u\in\S(W,X)$.
  By Lemma \ref{lem:esht.1.18},
  we need to show that
  \begin{equation}
  \label{eqn:esht.1.9}
    \sharp(\psi_{K,Y,X}\bullet\uS(Y,u^K))
    =\sharp(\usSet(K,\uS(Y,u))\bullet\psi_{K,Y,W})
  \end{equation}
  holds where $\sharp=\sharp_{\uS(Y,W^K),K,\uS(Y,X)}$.
  \[
    \xymatrix{
      \uS(Y,W^K) \ar[rr]^(0.4){\psi_{K,Y,W}} \ar[d]_{\uS(Y,u^K)}
        && \usSet(K,\uS(Y,W)) \ar[d]^{\usSet(K,\uS(Y,u))}\\
      \uS(Y,X^K) \ar[rr]_(0.4){\psi_{K,Y,X}}&
        & \usSet(K,\uS(Y,X))}
  \]

(i)
  By definition of $\sharp$ and $\psi_{K,Y,X}$,
  the dotted arrows in the following diagram computes
  $\sharp(\psi_{K,Y,X}\bullet\uS(Y,u^K))$.
  \[
    \xymatrix{
      K\times \uS(Y,W^K)
        \ar@{.>}[d]_{\id_K\times\uS(Y,u^K)} &&\\
      K\times \uS(Y,X^K)
        \ar@{.>}[rr]^(0.43){(\pr_2,\beta_{K,X}\cdot\pr_1)}
        \ar[d]_{\id_K\times \psi_{K,Y,X}}
        && \uS(Y,X^K)\times\uS(X^K,X) \ar@{.>}[d]^{\ub} \\
      K\times \usSet(K,\uS(Y,X)) \ar[rr]_{\ev_{K,\uS(Y,X)}} && \uS(Y,X)}
  \]

(ii)
  By definition of $\psi_{K,Y,W}$ and Lemma \ref{lem:esht.1.15},
  the following is a commutative diagram.
  \[
    \xymatrix{
      K\times \uS(Y,W^K)
        \ar@{.>}[rr]^(0.43){(\pr_2,\beta_{K,W}\cdot\pr_1)}
        \ar[d]_{\id_K\times\psi_{K,Y,W}}  &
        & \uS(Y,W^K)\times\uS(W^K, W) \ar@{.>}[d]^{\ub}\\
      K\times \usSet(K,\uS(Y,W))
        \ar[rr]^{\ev_{K,\uS(Y,W)}}
        \ar[d]_{\id_K\times \usSet(K,\uS(Y,u))}
        && \uS(Y,W) \ar@{.>}[d]^{\uS(Y,u)} \\
      K\times \usSet(K,\uS(Y,X)) \ar[rr]_{\ev_{K,\uS(Y,X)}} && \uS(Y,X)}
  \]
  Then, the dotted arrows computes
  $\sharp(\usSet(K,\uS(Y,u))\bullet\psi_{K,Y,W})$.

(iii)
  Let $k_n\in K_n$ and $f_n\in\uS(Y,W^K)_n$.
  By \textbf{S5} and Lemma \ref{lem:esht.1.36},
  \begin{alignat*}{1}
    \uS(Y,u)_n((\beta_{K,W})_n(k_n)\ub_n f_n)
    &= (\sigma^n_0)^*(\widetilde{u})\ub_n((\beta_{K,W})_n(k_n)\ub_n f_n)\\
    &= (\beta_{K,X})_n(k_n)\ub_n(\sigma^n_0)^*(\widetilde{u^K})\ub_n f_n\\
    &= (\beta_{K,X})_n(k_n)\ub_n\uS(Y,u^K)_n(f_n).
  \end{alignat*}
  Then,
  \[
    \xymatrix{
      \uS(Y,W^K)\times\uS(W^K,W)
        \ar[rr]^(0.65){\ub}
        && \uS(Y,W) \ar[d]^{\uS(X,u)}  \\
      K\times \uS(Y,W^K)
        \ar[u]^{(\pr_2,\beta_{K,W}\cdot\pr_1)}
        \ar[d]_{\id_K\times\uS(Y,u^K)} && \uS(Y,X)\\
      K\times\uS(Y,X^K)
        \ar[rr]_(0.43){\alpha_{K,X}\times\id_{\uS(X\otimes K,Z)}}
        && \uS(Y,X^K)\times\uS(X^K,X) \ar[u]_{\ub}}
  \]
  is a  commutative diagram.

  Therefore, by (i), (ii) and (iii), \eqref{eqn:esht.1.9} holds.

(3) follows from (1) and (2).
\end{proof}

\end{document}